\newtheorem{theorem}{Theorem}[section]
\newtheorem{conjecture}[theorem]{Conjecture}
\newtheorem{lemma}[theorem]{Lemma}
\newtheorem{claim}{Claim}
\newenvironment{proof}{\noindent {\bf
Proof.}}{\rule{3mm}{3mm}\par\medskip}
\newenvironment{pfc}{\noindent {\em
Proof.}}{\rule{2mm}{2mm}\par\medskip}
\pgfplotsset{compat=1.15}
\begin{document}

\title{Square Coloring of Planar Graphs with Maximum Degree at Most Five}

\author{Jiani Zou$^1$, Miaomiao Han$^1$, Hong-Jian Lai$^2$\\
\small $^1$College of Mathematical Science,  Tianjin Normal University, Tianjin 300387,  China,\\
\small $^2$Department of Mathematics, West Virginia University, Morgantown, WV, 26505\\
\small Emails: jiani\_zou@163.com; mmhan2018@hotmail.com; hjlai2015@hotmail.com\\
\small Dedicated to Professor Xueliang Li on the occasion of his 65th birthday.}

\date{}

\maketitle

\begin{abstract}
  The \textit{square} of a graph $G$, denoted by $G^2$, is obtained from $G$ by adding an edge to connect every pair of vertices with a common neighbor in $G$. In this paper we prove that for every planar graph $G$ with maximum degree at most $5$, $G^2$ admits a proper vertex coloring using at most $17$ colors, which improves the upper bound $18$ recently obtained by Hou, Jin, Miao, and Zhao in \cite{Hou2023}.
 
 \end{abstract}
{\small \indent {\bf Keywords: } maximum degree; square coloring; planar graphs}

\section{Introduction}

 In this paper, graphs are finite and simple. The edge set, vertex set,  minimum degree and maximum degree of a graph $G$ are denoted by $E(G), V(G)$, $\delta(G), \Delta(G)$, respectively. Let $G$ be a connected graph. For two distinct vertices $u,v\in V(G)$, denote by $d_G(u,v)$ the distance between vertices $u$ and $v$ in $G$. The \textit{square coloring} of $G$ is a mapping \(\kappa : V(G)\rightarrow \{1, 2, \ldots , \ell\}\) such that $\kappa(u)\not= \kappa(v)$ for any two distinct vertices $u,v$ with $d_{G}(u,v)\le 2$. For a vertex $v\in V(G)$, let $\kappa(v)$ denote the color of vertex $v$. For $V'\subseteq V(G)$, denote $\kappa(V')=\{\kappa(v): \forall v\in V'\}.$ The \textit{square} of $G$, denoted by $G^2$, is obtained from $G$ by adding an edge to connect every pair of vertices with a common neighbor in $G$. Obviously the square coloring of $G$ is a proper vertex coloring of $G^2$. The \textit{square chromatic number} of $G$, denoted by $\chi(G^2)$, is the minimum integer $\ell$ such that  $G$ admits a square coloring using $\ell$ colors.  For a graph $G$, we have $\Delta(G)+1\le\chi(G^2)\le\Delta(G^2)+1\le\Delta^2 (G)+1$. 
 
 The square coloring problem of graphs, proposed by Kramer and Kramer \cite{Kramer1969}, has been widely studied. In 1977, Wegner \cite{Wegner1977} proved that  $\chi(G^2)\le8$ if $G$ is a planar graph with $\Delta\le3$ and proposed a famous conjecture as stated below.
 
 \begin{conjecture}(Wegner)\label{c1}
 Every planar graph $G$ with maximum degree $\Delta$ satisfies
 $$ \chi (G^2)\le\left\{\begin{array}{ll}
			7, & if \Delta=3; \\
			\Delta+5, & if 4\le\Delta\le7;\\
			\lfloor \frac{3\Delta}{2}\rfloor+1,& if \Delta\ge8.
			
		\end{array}\right. $$
 \end{conjecture}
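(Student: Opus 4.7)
The plan is to handle the three cases of $\Delta$ separately, in each instance by analyzing a minimum counterexample with the discharging method on the planar embedding of $G$. The common setup is Euler's formula rewritten as $\sum_{v \in V(G)}(d(v)-4) + \sum_{f \in F(G)}(\ell(f)-4) = -8$, which we interpret as initial charges $\mu(v) = d(v)-4$ and $\mu(f) = \ell(f)-4$. A reducible configuration for the target number of colors $k$ is a subgraph $H \subseteq G$ such that any square coloring of $G - V(H)$ with $k$ colors extends to $V(H)$, justified by comparing, for each $w \in V(H)$, the size of its second neighborhood $N_{G^2}(w)$ against $k-1$. Discharging rules are then designed so that, if no reducible configuration appears, every vertex and face ends with nonnegative final charge, contradicting the total $-8$.

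For $\Delta = 3$, the target $\chi(G^2) \le 7$ requires cataloguing clusters of $2$-vertices, short faces, and triangle or quadrilateral arrangements of $3$-vertices; a standard set of rules sends charge from long faces and high-degree vertices toward $2$-vertices and short incident faces. For $\Delta \ge 8$, the target $\lfloor 3\Delta/2\rfloor + 1$ matches the lower bound realized by the explicit extremal construction, so there is very little slack. Here the plan is to extract from the discharging analysis a large set $I \subseteq V(G)$ of vertices whose $G^2$-degree lies below the threshold, color $G - I$ by induction, and complete the coloring greedily on $I$. In this regime one can also import the tools of Havet, van den Heuvel, McDiarmid, and Reed that already yield a bound of $(1+o(1))\frac{3\Delta}{2}$, tightening them using the extra deficit coming from planarity.

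The hardest range is the intermediate one $4 \le \Delta \le 7$, where we aim for $\chi(G^2) \le \Delta + 5$. The naive single-vertex reducibility --- a vertex $v$ with $|N_{G^2}(v)| \le \Delta + 4$ is free to be removed --- is insufficient, because planar graphs at these degrees easily contain vertices with $(3/2)\Delta$ or more second neighbors. One must therefore build more delicate reducible configurations, for instance pairs of adjacent vertices with controlled joint second-neighborhood, a triangle with bounded surrounding degree, or a $4$-face with specific peripheral structure, and design discharging rules tailored to exclude them. The crucial obstacle is that Wegner's prediction leaves almost no arithmetic slack in this range: for $\Delta = 4$ the bound is $9$, only one above $\Delta + 4$, and for $\Delta = 5$ it is $10$, whereas the current best bound obtained in \cite{Hou2023} is $18$. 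Any successful discharging must therefore simultaneously forbid every configuration whose second neighborhood exceeds $\Delta + 4$ and match this prohibition against the tight Euler-formula deficit; closing this gap has been the central open problem in the area for over forty years, and reaching Wegner's exact prediction in any single case of the intermediate range would itself be a major advance.
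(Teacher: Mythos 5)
This statement is Wegner's Conjecture, which the paper explicitly presents as an \emph{open} conjecture (``this conjecture remains open for planar graphs with $\Delta\ge4$''); the paper contains no proof of it, and its own contribution is only the much weaker bound $\chi(G^2)\le 17$ for $\Delta\le 5$, versus the conjectured $10$. So there is no ``paper's own proof'' to compare against, and your proposal must stand on its own as a complete argument --- which it does not. What you have written is a research programme, not a proof: for each range of $\Delta$ you name the general method (minimum counterexample, reducible configurations, discharging on $\mu(v)=d(v)-4$, $\mu(f)=\ell(f)-4$), but you never exhibit a single concrete reducible configuration, never state a discharging rule, and never verify that final charges are nonnegative. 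The entire mathematical content of a discharging proof lives in exactly those omitted details.

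The gaps are not merely expository. For $\Delta=3$ the bound $7$ is a genuine theorem (Thomassen; Hartke--Jahanbekam--Thomas), but it required a long and delicate argument that ``a standard set of rules'' does not summarize. For $\Delta\ge 8$, the Havet--van den Heuvel--McDiarmid--Reed bound is $(1+o(1))\tfrac{3\Delta}{2}$ with an uncontrolled error term; there is no known way to ``tighten'' it to the exact value $\lfloor 3\Delta/2\rfloor+1$, and the best explicit bounds (e.g.\ $\lceil 5\Delta/3\rceil+25$ for $\Delta\ge 249$) are far from it. For $4\le\Delta\le 7$ you candidly concede that closing the gap ``would itself be a major advance,'' which is an admission that the proof does not exist. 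A correct response here is to recognize the statement as a conjecture and either prove it (which would be a landmark result) or decline to claim a proof; as written, the proposal proves nothing.
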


Thomassen \cite{Thomassen2018} proved that $\chi(G^2)\le7$ if $G$ is a planar graph with $\Delta=3$, which is also proved independently by Hartke, Jahanbekam and Thomas \cite{Hartke2016}. However, this conjecture remains open for planar graphs with $\Delta\ge4$. 

Wong \cite{Wong1996} showed that 
 $\chi(G^2)\le3\Delta+5$ for planar graphs with $\Delta\ge7$. Borodin Glebov, Van den Heuvel \cite{Borodin2002} proved that  $\chi(G^2)\le\lceil\frac{9\Delta}{5}\rceil+1$ for planar graphs with $\Delta\ge47$, Molloy and Salavatipour \cite{Molloy2005} improved this bound to $\chi(G^2)\le\lceil\frac{5\Delta}{3}\rceil+25$ for planar graphs with $\Delta\ge249$. Bousquet, de Meyer, Deschamps, and Pierron \cite{Bousquet2022} showed that $\chi(G^2)\le12$
 for planar graphs with $\Delta\le4$ by an automatic discharging method. Lih, Wang, and Zhu \cite{Lih2003} showed Conjecture \ref{c1} holds for $K_4$-minor free graphs. More results and related problems towards this conjecture can be found in the survey \cite{Cranston2023} given by Cranston. 
 
 Bousquet, Deschamps, de Meyer, and Pierron \cite{Bousquet2023} obtained the following result.

 \begin{theorem}\cite{Bousquet2023}\label{2delta+7}
 For any planar graph $G$ with $6\le\Delta\le31$, we have $\chi(G^2)\le2\Delta+7$.
 \end{theorem}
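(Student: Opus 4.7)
The plan is to argue by contradiction using the discharging method. Suppose the statement fails, and let $G$ be a counterexample minimizing $|V(G)|+|E(G)|$, fixed with a planar embedding. Write $k=2\Delta+7$ for the target number of colors. By the minimality of $G$, every proper subgraph of $G$ admits a square coloring with at most $k$ colors, and this is the source of all reducibility arguments.

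The first step is to establish forbidden local configurations. The baseline reducibility is: if a vertex $v\in V(G)$ satisfies $|N_{G^2}(v)|\le 2\Delta+6$, then any square coloring of $G-v$ (which exists by minimality) extends to $G$, contradicting the choice of $G$. Since $|N_{G^2}(v)|\le d(v)+\sum_{u\in N(v)}(d(u)-1)$, this alone forces every $2$-vertex of $G$ to have a small distance-$2$ neighborhood, and already restricts the degree pattern around $3$- and $4$-vertices. More refined reducible configurations, such as short paths, triangles, or clusters of low-degree vertices, can be obtained by deleting a small set $S\subseteq V(G)$, square-coloring $G-S$ by minimality, and then completing the coloring on $S$ through a list-coloring or Hall-type matching argument on the lists of colors available at each vertex of $S$. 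The output of this step is a list of local degree patterns that $G$ cannot contain.

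The second step is the discharging argument. Assign each vertex and each face $x$ of $G$ the initial charge $\mu(x)=d(x)-4$; by Euler's formula the total charge is $-8$. Design redistribution rules that push charge from high-degree vertices and long faces towards low-degree vertices and short faces, calibrated so that the structural restrictions from the first step guarantee every element of $G$ ends with nonnegative charge. The contradiction $-8\ge 0$ then finishes the proof.

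The principal obstacle is the tuning of the discharging rules across the whole range $6\le\Delta\le 31$. The target $k=2\Delta+7$ grows linearly with $\Delta$ while Euler's identity is independent of $\Delta$, so a uniform choice of transferred amounts that works when $\Delta=6$ can leave too little charge somewhere when $\Delta$ is large, and vice versa. Handling this almost certainly requires letting the amounts of charge depend explicitly on $\Delta$, or splitting the range into a few subintervals and adjusting the rules in each. The tightest configurations are typically $3$-faces whose boundary contains two low-degree vertices, and degree-$5$ vertices lying in several triangles; verifying nonnegativity of the final charge at such places while remaining consistent with the reducibility list from Step~1 is the main technical difficulty.
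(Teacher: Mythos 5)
This statement is not proved in the paper at all: it is Theorem~\ref{2delta+7}, quoted from \cite{Bousquet2023}, so there is no in-paper argument to compare against. Judged on its own, your proposal is a methodological outline rather than a proof. You correctly identify the standard framework (minimal counterexample, reducible configurations, discharging from $\mu(x)=d(x)-4$ and Euler's formula), but every piece of actual mathematical content is left unexecuted: you do not exhibit a single concrete reducible configuration beyond the trivial one, you do not state any discharging rule, and you do not verify nonnegativity of final charges anywhere. You even name the central difficulty yourself --- calibrating rules that work uniformly for $6\le\Delta\le31$ against a $\Delta$-independent total charge of $-8$ --- and then leave it unresolved. That calibration, together with the (large) list of reducible configurations, \emph{is} the proof; in \cite{Bousquet2023} (building on the automatic-discharging machinery of \cite{Bousquet2022}) it requires an extensive, partly computer-assisted case analysis. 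A sketch that defers exactly this content cannot be counted as a proof of the theorem.

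There is also a local gap in your baseline reducibility step. You claim that if $|N_{G^2}(v)|\le 2\Delta+6$ then any square coloring of $G-v$ extends to $G$. A free color for $v$ does exist, but two neighbors $x,y$ of $v$ may satisfy $d_G(x,y)\le 2$ only through $v$; in $G-v$ they can legally receive the same color, and reinserting $v$ then violates the distance-2 condition between $x$ and $y$. This is precisely why the present paper's Lemma~\ref{l0} works with the modified graphs $M\in\mathcal{G}(v)$, obtained by adding edges among $N_G(v)$ so that $d_M(v_i,v_j)\le 2$ is preserved, while also checking $\Delta(M)\le\Delta$ and planarity. Your deletion argument needs the same repair, and the added edges in turn constrain which configurations are actually reducible.
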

Note that Theorem \ref{2delta+7} does not include the case of $\Delta=5$.   For a planar graph $G$ with maximum degree at most $5$, Bu and Zhu \cite{Bu2018} proved $\chi(G^2)\le20$,  the upper bound was improved to 19 by Chen, Miao and Zhou \cite{Chen2022}, and recently Hou, Jin, Miao, and Zhao obtained the upper bound $18$ as below.
\begin{theorem}\cite{Hou2023}
 For any  planar graph $G$ with $\Delta\le5$, we have $\chi(G^2)\le 18$.
\end{theorem}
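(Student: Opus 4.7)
The plan is to argue by contradiction via the discharging method, the standard tool behind Wegner-type results on square colorings of planar graphs. Suppose, for contradiction, that there exists a planar graph $G$ with $\Delta(G)\le 5$ and $\chi(G^2)\ge 19$, and choose such a counterexample that minimizes $|V(G)|+|E(G)|$. Assign each vertex $v$ the initial charge $\mu(v)=d_G(v)-4$ and each face $f$ of a plane embedding of $G$ the initial charge $\mu(f)=d_G(f)-4$. Euler's formula $|V(G)|-|E(G)|+|F(G)|=2$, together with $\sum_v d_G(v)=\sum_f d_G(f)=2|E(G)|$, makes the total charge equal to $-8$. The strategy is to exhibit local rules that redistribute charge (preserving the global sum) and yet leave every vertex and face with non-negative final charge, a contradiction.

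The first phase is to establish a battery of reducibility lemmas that forbid certain local configurations from $G$. The schema is uniform: for a target substructure $H\subseteq G$, delete a carefully chosen set $S\subseteq V(H)$, invoke the minimality hypothesis to obtain a proper $18$-coloring of $(G-S)^2$, and then reintroduce the vertices of $S$ in a good order, showing at each step that strictly fewer than $18$ distinct colors appear on the neighbors at distance at most $2$ in $G$ of the vertex being colored. Since a vertex $v$ of degree $k$ has at most $k\Delta\le 25$ other vertices within distance $2$, the naive bound is only $\chi(G^2)\le 26$; real slack appears exactly when $v$'s neighbors have small degree or share common neighbors with one another, and each reducibility lemma converts a piece of this slack into a forbidden configuration. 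Expected conclusions include a lower bound on $\delta(G)$, restrictions on the degree sequences around $3$- and $4$-vertices, bounds on $d(u)+d(v)$ for edges $uv$, and constraints such as ``no $5$-vertex is adjacent to too many $3$-vertices''.

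The second phase designs discharging rules that transfer positive charge from $5$-vertices and from faces of length at least $5$ toward the negatively charged low-degree vertices and triangular faces, and then verifies face by face and vertex by vertex that the new charges are non-negative. The principal obstacle, and what makes $\Delta\le 5$ genuinely harder than the large-$\Delta$ regime of Theorem~\ref{2delta+7}, is that a $5$-vertex has a second-neighborhood of size up to $25$, leaving almost no room against the budget of $18$ colors; the reducibility lemmas and the discharging rules must therefore be finely calibrated so that \emph{every} local picture around a $5$-vertex either contains a forbidden configuration or yields non-negative final charge. This balancing act, which expands into many parallel cases organized by the degree sequence of a $5$-vertex's neighbors, is the technical heart of the proof.
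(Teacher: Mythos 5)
Your proposal correctly identifies the method — minimal counterexample with respect to $|V(G)|+|E(G)|$, initial charges $\mu(v)=d_G(v)-4$ and $\mu(f)=d_G(f)-4$ summing to $-8$ by Euler's formula, reducible configurations obtained by deleting a vertex set, extending a coloring of the smaller graph, and counting colors on the second neighborhood — and this is precisely the scheme the present paper uses to prove the \emph{stronger} bound $17$ (the bound $18$ is only cited here from \cite{Hou2023}, not reproved). So there is no disagreement about the route.

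The problem is that what you have written is a plan, not a proof. Every piece of actual mathematical content is deferred: you say that ``expected conclusions include'' a lower bound on $\delta(G)$ and restrictions on degrees around $3$- and $4$-vertices, but you do not state, let alone prove, a single concrete reducibility lemma; you say the second phase ``designs discharging rules'' but give none; and you perform no verification of non-negativity. In a discharging argument the entire difficulty is in exhibiting a \emph{specific, mutually consistent} list of forbidden configurations and rules under which every case closes — for comparison, the paper needs: $\delta(G)\ge 3$; every $3$-face contains a $5$-vertex and no $3$-vertex; every $3$-vertex has three $5$-neighbors and lies on at most one $4$-face; $f_3(v)\le 1$ for $4$-vertices and $f_3(v)\le 4$ for $5$-vertices; several finer statements about $5$-vertices with $f_3(v)\in\{2,3,4\}$ (Lemma 3.8); the auxiliary edge-based Lemma 3.2 to break ties when a single vertex deletion does not suffice; and five calibrated rules $(R_1)$--$(R_5)$ including a ``corner of a bad $5$-vertex'' transfer invented precisely to rescue the $f_3(v)=4$ case. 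Your one quantitative observation — that $|N^2(v)|\le 25$ so the naive bound is $26$ — is correct but carries no weight toward $18$. As it stands the proposal cannot be checked for correctness because the statements that would need to be checked have not been made; it should be assessed as containing a genuine gap, namely all of the technical heart you yourself identify as the technical heart.
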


The main result of this paper further improves the upper bound to $17$ as stated in Theorem \ref{main} below. Furthermore, together with Theorem \ref{2delta+7} and some of the results listed above, Theorem \ref{main} implies that  $\chi(G^2)\le2\Delta+7$  holds for any planar graph $G$ with $\Delta\le 31$, which extends Theorem \ref{2delta+7}.

\begin{theorem}\label{main}
 For any  planar graph $G$ with $\Delta\le5$, we have $\chi(G^2)\le 17$.
\end{theorem}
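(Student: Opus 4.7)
The plan is to prove Theorem \ref{main} by the standard minimum counterexample plus discharging method. Assume that $G$ is a counterexample with the fewest vertices (with ties broken by the fewest edges): $G$ is planar, $\Delta(G)\le 5$, $\chi(G^2)\ge 18$, but every proper subgraph admits a square coloring with at most $17$ colors. Fix a plane embedding of $G$. The goal is to derive a contradiction by showing, on one hand, that $G$ must avoid a rich list of local configurations (each proved reducible via an extension argument), and, on the other hand, that the planar structure forces at least one such configuration to appear.

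First I would prove basic reducibility: $G$ is $2$-connected (otherwise combine square colorings of blocks), $\delta(G)\ge 2$ (a vertex $v$ of degree $\le 1$ has at most $1+4=5$ vertices within distance $2$, so a coloring of $G-v$ extends), and $G$ contains no edge $uv$ with $d(u)+d(v)$ small enough that deleting $uv$ or contracting along it yields a coloring whose $\le 16$ forbidden colors at $u,v$ still leave a free color. More substantial reducibility involves configurations around $3$-faces and $4$-faces: a $3$-face $uvw$ with prescribed low degrees, adjacent $3$-faces sharing an edge with specific degree patterns, and small vertices whose distance-$2$ neighborhood can be shown to have size at most $16$ after judicious recoloring using Kempe-chain or greedy-reordering tricks. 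Each such configuration is ruled out by removing a carefully chosen vertex or edge, invoking minimality to get a $17$-coloring of the reduced graph, and then showing that the constraints at the reinserted vertices leave at least one free color out of $17$.

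Next I would run a discharging argument. Assign initial charge $\mu(x)=d(x)-4$ to each $x\in V(G)\cup F(G)$; by Euler's formula,
\[
\sum_{v\in V(G)}(d(v)-4)+\sum_{f\in F(G)}(d(f)-4)=-8<0.
\]
Negative charge sits on $2$- and $3$-vertices and on $3$-faces; positive charge sits on $5$-vertices and on faces of length $\ge 5$. I would design discharging rules that send charge from $5$-vertices to incident short faces and to low-degree neighbors, and from long faces to incident $3$-faces, with the precise transferred amounts tuned so that the forbidden configurations from the previous step are exactly the ones needed to guarantee nonnegative final charge at every element. The desired contradiction is that $\sum_x \mu^*(x)\ge 0$ while the total charge is $-8$.

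The hard part will be twofold. The first difficulty is sharpening the reducible-configuration list beyond what suffices for the $18$-color bound of Hou--Jin--Miao--Zhao: to save one color one must handle borderline cases where a vertex has almost $17$ constraints in $G^2$, which forces Kempe-style exchange arguments on the smaller graph (swapping the colors of two carefully chosen vertices in the distance-$2$ neighborhood to liberate a color at the deleted vertex). The second difficulty is balancing the discharging: because $\Delta=5$ keeps the contribution from $5$-vertices small relative to what $3$- and $4$-faces absorb, rules have to track not just incident faces but also the degrees of second neighbors, so the verification that every vertex and every face ends with $\mu^*\ge 0$ becomes an extensive case analysis split by face length and by the degree sequences on each face. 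Routine but lengthy calculations there are exactly the place where the improvement from $18$ to $17$ is obtained.
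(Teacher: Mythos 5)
Your proposal correctly identifies the overall strategy the paper uses --- minimal counterexample, reducible configurations, and discharging from the initial charge $\mu(x)=d(x)-4$ with total $-8$ --- but it is a plan rather than a proof: every piece of actual mathematical content is deferred. You write that you ``would prove'' certain configurations reducible and ``would design'' discharging rules ``tuned so that'' the final charges are nonnegative, but the theorem lives entirely in those choices. The paper's contribution is precisely the explicit list: $\delta(G)\ge 3$; every $3$-face has all vertices of degree $\ge 4$ and at least one of degree $5$; every $3$-vertex has only $5$-neighbors and at most one incident $4$-face; a $4$-vertex lies on at most one $3$-face; a $5$-vertex lies on at most four $3$-faces, with detailed restrictions on $n_3(v)$, $n_4(v)$, and $f_4(v)$ when $f_3(v)\in\{2,3,4\}$; plus the notions of \emph{bad} $5$-vertex and its \emph{corners}, which drive a vertex-to-vertex discharging rule ($R_3$) that your sketch does not anticipate (you only mention vertex-to-face and face-to-face transfers, whereas the paper's $R_5$ sends charge from $5^+$-faces to incident vertices and $R_1$--$R_3$ move charge between vertices). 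Without specifying these configurations and rules and verifying the roughly dozen cases, there is no way to check that the two halves of the argument actually meet, so the proposal cannot be credited as a proof.

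One further substantive divergence: you lean on Kempe-chain exchanges to handle the borderline cases, but the paper never uses them. Its two engines are different. First, a family $\mathcal{G}(v)$ of reduced planar graphs obtained by deleting $v$ and adding edges among its former neighbors so that they remain pairwise at distance $\le 2$; minimality yields a $17$-coloring of the reduced graph that extends whenever $|N^2(v)|\le 16$ (Lemma 3.1). Second, and more delicately, an edge version (Lemma 3.2): for every edge $uv$, either some endpoint has $|N^2|\ge 18$ or both have $|N^2|\ge 17$; otherwise one deletes $uv$, colors the smaller graph, and recolors $u$ then $v$ greedily. This second lemma is what handles exactly the ``almost $17$ constraints'' borderline cases you flag as needing Kempe chains, and it does so with a purely greedy two-step recoloring. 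If you intend to pursue your outline, you would need either to reproduce something like this edge lemma or to substantiate the Kempe-chain claims, since a color-swap argument in $G^2$ (which has large cliques around every vertex) is far from routine and is not obviously available here.
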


The rest of this paper is organized as below. We introduce some terminology in Section 2, analyze the structure of a minimum counterexample of Theorem \ref{main} in Section 3.1 and apply discharging methods in Section 3.2 to obtain a contradiction, which justifies Theorem \ref{main}.

\section{Preliminaries}

To obtain Theorem \ref{main}, we first introduce  some notations and terms. For a vertex $u\in V(G)$, let $N_G(u)$ denote the set of neighbors of $u$, and let $d_{G}(u)=|N_G(u)|$ be the degree of $u$ in graph $G$. If $d_{G}(u)=k$ ($d_G(u)\ge k$, respectively), then we call $u$ a $k$-$vertex$ ($k^{+}$-$vertex$, respectively). For $u\in V(G)$, let $n_i(u)=|\{v\in N_G(u):d_G(v)=i\}|$ and $N^2_G(u)=\{v\in V(G)\setminus\{u\}:d_{G}(u,v)\le2\}$. Note that $N_G^2(u)$ represents the neighbors of $u$ in graph $G^2.$ If the graph $G$ is understood from context, we may omit the subscripts in those notations for convenience.

For a plane graph $G$, let $F(G)$ denote the set of all faces of $G$. For $f\in F(G)$, the degree of $f$, denoted by $d_G(f)$, is defined as the number of edges incident with the $f$, and $f$ is called a $d_{G}(f)$-face. If $f\in F(G)$ is a cycle with vertices $v_1,v_2,\cdots,v_{d_{G}(f)}$, then $f$ is denoted by $[v_1v_2\cdots v_{d_{G}(f)}].$ Let $f_i(v)$ denote the number of $i$-faces containing vertex $v$. If $d_G(f)=k$ ($d_G(f)\ge k$, resp.), then $f$ is called a $k$-face ($k^{+}$-face resp.).

For a $5$-vertex $v\in V(G)$, if $v$ is  contained in exactly four $3$-faces, then $v$ is called a {\em bad} $5$-vertex. For a bad $5$-vertex $v$, if $u\in N_G(v)$ is a $5$-vertex such that $uv$ is contained in both a $3$-face and a $5^{+}$-face, then we call $u$ a corner of $v$. Clearly, the number of corners of a bad $5$-vertex is at most two. For a $5$-vertex $u$ with $f_3(u)\le 3$, let $t_5(u)=|\{v\in N_G(u):$ $v$ is a bad  $5$-vertex and $u$ is a corner of $v\}|$.  

For any vertex $v\in V(G)$, it is straightforward to obtain that 
\begin{equation}\label{n2}
|N^2(v)|\le\sum_{u\in N_G(v)}d_G(u)-2f_3(v)-f_4(v).
\end{equation}

\section{Proof of Theorem \ref{main}}

In this section, we will present two key lemmas, Lemmas \ref{l0} and \ref{l1}, and analyze the structure of a minimum counterexample by developing several tool lemmas in Section 3.1 to prove Theorem \ref{main}. 

\vspace{0.1in}
\noindent{\textbf{ Theorem
\ref{main}}}
{\em For a planar graph $G$ with $\Delta\le5$, we have $\chi(G^2)\le 17$.}

\vspace{0.1in}

By contradiction, assume that Theorem \ref{main} has a counterexample. Choose one counterexample $G$ such that, among all counterexamples to Theorem \ref{main},
\begin{equation}\label{ex}
\mbox{$|E(G)|+|V(G)|$ is minimized.}
\end{equation}

Let $R=\{1,2,\dots,17\}$ be the set of colors. For a fixed vertex $v\in V(G)$ with $N_G(v)=\{v_1,v_2,\cdots,v_{d(v)}\}$. Let $\mathcal{G}(v) $ be a family of planar graphs such that a graph $M\in \mathcal{G}(v)$ if $M$ can be obtained from $G$ by deleting the vertex $v$ and adding some necessary edges $v_iv_j$ for some $i,j\in\{1,\cdots,d(v)\}$ satisfying each of the following:

(i) $d_{M}(v_i,v_j)\le 2$ for each pair $i,j\in \{1,\cdots,d(v)\}$;

(ii) $\Delta(M)\le 5$;

(iii) $|V(M)|+|E(M)|<|V(G)|+|E(G)|$.

\begin{lemma}\label{l0}
If there exists a planar graph $M\in \mathcal{G}(v)$ for a vertex $v$ in $G$, then $|N^2(v)|\ge 17.$
\end{lemma}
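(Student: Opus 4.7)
The plan is a standard minimality argument. Assume for contradiction that $|N^2(v)| \le 16$ and fix any $M \in \mathcal{G}(v)$. By the definition of $\mathcal{G}(v)$, the graph $M$ is planar with $\Delta(M) \le 5$, and by condition (iii) we have $|V(M)|+|E(M)|<|V(G)|+|E(G)|$. Hence by the minimality of $G$ expressed in (\ref{ex}), $M$ cannot itself be a counterexample to Theorem \ref{main}, so $\chi(M^2)\le 17$ and we may fix a proper square coloring $\kappa:V(M)\to R$.

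Next I would show that the restriction of $\kappa$ to $V(G)\setminus\{v\}=V(M)$ is a proper coloring of $G^2-v$. Concretely, for any two distinct $x,y\in V(G)\setminus\{v\}$ with $d_G(x,y)\le 2$, I claim $d_M(x,y)\le 2$ and hence $\kappa(x)\ne\kappa(y)$. This splits into cases according to how the distance in $G$ is witnessed: if $xy\in E(G)$, then $xy\in E(M)$ since $M$ retains all edges of $G$ not incident with $v$; if $x$ and $y$ share a common neighbor $w\ne v$ in $G$, then the path $xwy$ persists in $M$; and if the only common neighbor of $x$ and $y$ in $G$ is $v$ itself, so that $x,y\in N_G(v)$, then condition (i) in the definition of $\mathcal{G}(v)$ ensures $d_M(x,y)\le 2$. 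In each case the pair is still forced to receive different colors under $\kappa$.

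Finally, I would extend $\kappa$ to $v$ by selecting any color in $R\setminus\kappa(N_G^2(v))$. Since $|R|=17$ and we assumed $|N_G^2(v)|\le 16$, this set is nonempty, so the extension yields a proper square coloring of $G$ with at most $17$ colors, contradicting the assumption that $G$ is a counterexample. This forces $|N^2(v)|\ge 17$. The entire argument is routine; the only step that truly requires thought is the case analysis in the second paragraph, and it is precisely there that condition (i) in the definition of $\mathcal{G}(v)$ has been engineered to do the work, namely handling pairs of neighbors of $v$ whose sole $G$-certificate of being within distance $2$ runs through the deleted vertex $v$.
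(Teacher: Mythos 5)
Your proposal is correct and follows essentially the same route as the paper: obtain a square coloring of $M$ from the minimality of $G$, verify that it remains proper on $V(G)\setminus\{v\}$ (with condition (i) of $\mathcal{G}(v)$ handling pairs of neighbors of $v$ whose only distance-$2$ certificate passes through $v$), and then extend to $v$ using $|N^2(v)|\le 16<|R|$. If anything, your case analysis of how $d_G(x,y)\le 2$ is witnessed is slightly more explicit than the paper's, which simply asserts $d_M(x,y)\le d_G(x,y)$ in the remaining case.
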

\begin{proof}
By contradiction, we assume $|N^2(v)|\le 16$. Take a planar graph $M\in \mathcal{G}(v)$ for a vertex $v\in V(G)$. Since $\Delta(M)\le 5$ and $|V(M)|+|E(M)|<|V(G)|+|E(G)|$, $M$ has a square coloring $\kappa$ with at most $17$ colors by the minimality of graph $G$. Since $|N^2(v)|\le 16$, we have $R\setminus \kappa(N^2(v))\not=\emptyset$. We define a new coloring $\tau$ of $G$ from $\kappa$ as below
$$ \tau (u)=\left\{\begin{array}{ll}
			\kappa(u), & u\in V(G)\setminus\{v\}; \\
			a\in R\setminus \kappa(N^2(v)), & u=v .
		\end{array}\right. $$
Now we are to prove that $\tau$ is a square coloring of $G$. Since $\tau(v)\in R\setminus \kappa(N^2(v))$, it suffices to prove that $\tau(x)\not=\tau(y)$ for arbitray two vertices $x,y\in V(G)\setminus \{v\}$ with $d_{G}(x,y)\le 2$. If $x,y\in N_G(v)$, then $d_M(x,y)\le 2$ by $M\in \mathcal{G}_v$, which implies that $\tau(x)=\kappa(x)\not=\kappa(y)=\tau(y).$ Otherwise, since $d_{M}(x,y)\le d_{G}(x,y)\le 2$, we have $\tau(x)=\kappa(x)\not=\kappa(y)=\tau(y).$ Therefore, $G$ has a square coloring $\tau$ with $\chi(G^2)\le17$, contrary to the fact that $G$ is a counterexample to Theorem \ref{main}. \end{proof}

\begin{lemma}\label{l1}
    For any edge $uv$ in $G$, we have either $\max\{|N^2(u)|,|N^2(v)|\}\ge 18$ or $\min\{|N^2(u)|,|N^2(v)|\}\ge 17$.
\end{lemma}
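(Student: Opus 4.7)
The plan is to argue by contradiction: suppose the conclusion fails for some edge $uv\in E(G)$, meaning $\max\{|N^2(u)|,|N^2(v)|\}\le 17$ and $\min\{|N^2(u)|,|N^2(v)|\}\le 16$. Without loss of generality, take $|N^2(u)|\le 17$ and $|N^2(v)|\le 16$. The strategy is to run the reduction used in Lemma~\ref{l0} on the adjacent pair $\{u,v\}$ at once rather than on a single vertex, so that the slack coming from $u$ and from $v$ can be combined to color both in turn.

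First I would construct a planar graph $M$ from $G-\{u,v\}$ by adding a minimal set of edges among $(N_G(u)\cup N_G(v))\setminus\{u,v\}$ so that $d_M(x,y)\le 2$ holds for every pair $x,y\in N_G(u)\setminus\{v\}$ and also for every pair $x,y\in N_G(v)\setminus\{u\}$, while preserving $\Delta(M)\le 5$ and $|V(M)|+|E(M)|<|V(G)|+|E(G)|$. With such an $M$, the minimality assumption (\ref{ex}) supplies a square coloring $\kappa:V(M)\to R$ using at most $17$ colors. I would extend $\kappa$ to a coloring $\tau$ of $G$ by keeping $\tau(w)=\kappa(w)$ for $w\notin\{u,v\}$, then choosing $\tau(u)\in R\setminus\kappa(N^2(u)\setminus\{v\})$ (non-empty because $v\in N^2(u)$ forces $|N^2(u)\setminus\{v\}|\le 16$), and finally $\tau(v)\in R\setminus(\kappa(N^2(v)\setminus\{u\})\cup\{\tau(u)\})$ (non-empty because $u\in N^2(v)$ gives $|N^2(v)\setminus\{u\}|\le 15$, so at most $16<17$ colors are forbidden even after banning $\tau(u)$). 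The two distance conditions imposed on $M$ are precisely what is needed to ensure that $\tau$ respects the square-coloring constraint on every pair whose $G$-distance-$2$ witness is $u$ or $v$, which gives the desired contradiction with (\ref{ex}).

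The hard part will be the construction of $M$: maintaining $\Delta(M)\le 5$ while simultaneously guaranteeing both distance conditions. Naively joining every missing pair inside $N_G(u)\setminus\{v\}$ and inside $N_G(v)\setminus\{u\}$ can easily push the degree of some common neighbor past $5$, particularly a $5$-vertex in $N_G(u)\cap N_G(v)$ which only loses two edges when $u$ and $v$ are removed. I expect to control this with three ingredients: adding an edge $xy$ only when no length-at-most-$2$ $xy$-path already exists in $G-\{u,v\}$; using the cyclic orderings induced by the planar embedding around $u$ and around $v$ to route all required edges inside the two newly exposed faces without crossings; and exploiting vertices of $N_G(u)\cap N_G(v)$, which can act as common length-$2$ witnesses satisfying both distance conditions simultaneously. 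A short case analysis on $d_G(u)$, $d_G(v)$, and on the overlap $|N_G(u)\cap N_G(v)|$ seems the cleanest way to verify (c) in the critical configurations (in particular when both $u$ and $v$ are $5$-vertices sharing several $5$-neighbors).
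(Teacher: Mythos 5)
Your high-level strategy is right — use the minimality of $G$ to color a smaller graph, then recolor $u$ and $v$ in turn, exploiting $|N^2(u)\setminus\{v\}|\le 16$ and $|N^2(v)\setminus\{u\}|\le 15$ so that a free color survives at each step. The counting in your extension step is correct and matches what is needed. But your reduction has a genuine gap: you delete \emph{both} vertices $u$ and $v$ and then must re-wire $(N_G(u)\cup N_G(v))\setminus\{u,v\}$ so that all relevant pairs stay at distance at most $2$, while keeping the graph planar with $\Delta\le 5$. You correctly identify this as the hard part, but you never carry it out, and it is not clear it can always be done: when $u$ and $v$ are $5$-vertices whose neighbors are all $5$-vertices, each neighbor loses only one edge upon deletion and so can absorb only one new edge, which is generally not enough to realize all the required distance-$2$ pairs by added edges alone. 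An existence claim for $M$ in all configurations is exactly what your proof is missing.

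The paper sidesteps this entirely with a much lighter reduction: set $M=G-uv$, deleting only the \emph{edge}. Then $M$ is trivially planar with $\Delta(M)\le 5$ and $|V(M)|+|E(M)|<|V(G)|+|E(G)|$, so minimality gives a square coloring $\kappa$ of $M$ with $17$ colors. The key observation is that every pair $x,y\notin\{u,v\}$ with $d_G(x,y)\le 2$ still satisfies $d_M(x,y)\le 2$, because any witness edge or length-$2$ path between such $x$ and $y$ has both endpoints of each edge outside $\{u,v\}$ simultaneously only if that edge is not $uv$; hence only the constraints involving $u$ or $v$ can be violated, and these are repaired by recoloring $u$ and then $v$ exactly as you propose. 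If you replace your vertex-deletion construction with this edge deletion, the rest of your argument goes through verbatim and no case analysis on $d_G(u)$, $d_G(v)$, or $|N_G(u)\cap N_G(v)|$ is needed.
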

\begin{proof}
By contradiction, suppose that there exists an edge $uv\in E(G)$ such that $|N^2(u)|\le 17$ and $|N^2(v)|\le 16$ by assuming $|N^2(u)|\ge |N^2(v)|.$
Let $M=G-{uv}$. By the minimality of $G$ as stated in (\ref{ex}), $M$ has a square coloring $\kappa$ using at most 17 colors. Now we are to obtain a new coloring of $G$ from $\kappa$ by deleting colors of $u,v$ and recoloring $u,v$ successively. Since $|N^2(u)|\le 17$, we select one color $a\in R\setminus \kappa(N^2(u)\setminus v)$ to recolor $u$. By $|N^2(v)|\le 16$, we choose a color $b\in R\setminus \{\kappa(N^2(v)\setminus u)\cup a\}$ to recolor $v$. Thus we obtain a desired coloring of $G$ with $\chi(G^2)\le17$, contrary to the fact that $G$ is a counterexample to Theorem \ref{main}.\end{proof}

\subsection{Structural Properties of the Minimal Counterexample $G$}

In this subsection, we further explore the properties for this minimum counterexample $G$ of Theorem \ref{main}.

\begin{lemma}
 $\delta(G)\ge3.$
\end{lemma}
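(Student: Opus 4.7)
The plan is to argue by contradiction using Lemma~\ref{l0}. Suppose some vertex $v\in V(G)$ has $d_G(v)\le 2$; I will exhibit a planar graph $M\in\mathcal{G}(v)$ together with the bound $|N^2(v)|<17$, which directly contradicts Lemma~\ref{l0}. The case $d_G(v)=0$ is disposed of immediately: either by taking $M=G-v$ (conditions (i)--(iii) of $\mathcal{G}(v)$ are trivially satisfied and $|N^2(v)|=0$), or by noting that deleting an isolated vertex does not change the square chromatic number, contradicting the minimality of $G$.

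For $d_G(v)=1$ with $N_G(v)=\{u\}$, the natural candidate is $M=G-v$. Condition~(i) is vacuous because $N_G(v)$ is a singleton, condition~(ii) follows from $M\subseteq G$, and condition~(iii) holds since one vertex and one edge were deleted. Since $|N^2(v)|\le d_G(u)\le 5<17$, Lemma~\ref{l0} is violated.

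For $d_G(v)=2$ with $N_G(v)=\{v_1,v_2\}$, I would set $M=G-v$ if $v_1v_2\in E(G)$ and $M=(G-v)+v_1v_2$ otherwise. Planarity is preserved in the second case by routing the new edge along the deleted path $v_1vv_2$ in a planar embedding of $G$; the maximum degree stays at most $5$ because each $v_i$ loses the edge to $v$ and gains at most the new edge $v_1v_2$; and $d_M(v_1,v_2)\le 1$ by construction, so $M\in\mathcal{G}(v)$. The bound $|N^2(v)|\le 2+(d_G(v_1)-1)+(d_G(v_2)-1)\le 10<17$ then contradicts Lemma~\ref{l0}.

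The only mild obstacle is the bookkeeping that $M\in\mathcal{G}(v)$ in the $d_G(v)=2$ subcase when $v_1v_2\notin E(G)$: one must justify planarity after inserting $v_1v_2$ and confirm that the maximum degree remains bounded by $5$. Both follow at once from the route-along-deleted-path construction and the hypothesis $\Delta(G)\le 5$, after which the conclusion is an immediate application of Lemma~\ref{l0}.
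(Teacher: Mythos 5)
Your proposal is correct and follows essentially the same route as the paper: in the degree-$2$ case you construct $M=G-v+\{v_1v_2\}$, verify $M\in\mathcal{G}(v)$, and apply Lemma~\ref{l0} with the bound $|N^2(v)|\le 10$, exactly as the paper does. The only difference is that you spell out the degree-$0$ and degree-$1$ cases, which the paper dismisses as ``straightforward.''
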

\begin{proof}
It is straightforward to obtain $\delta(G)\ge 2$. Assume by contradiction that there is $v\in V(G)$ with $d_G(v)=2$. Let $N_G(v)=\{v_1,v_2\}$. Make $M=G-v+\{v_1v_2\}$. By simple counting, we confirm $M\in \mathcal{G}(v)$. By (\ref{n2}), we have $|N^2(v)|\le \sum_{i=1}^{2}d_G(v_i)\le 10$, a contradiction to Lemma \ref{l0}. Thus $\delta(G)\ge3.$ \end{proof}
\begin{lemma}\label{l2}
    Let $[v_1v_2v_3]$ be a $3$-face in graph $G$. Then $d_G(v_i)\ge4$ for all $i$ with $1\le i\le 3 $. Moreover, there is some $i\in\{1,2,3\}$ with $d_G(v_i)=5$.
\end{lemma}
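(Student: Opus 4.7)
The plan is to derive contradictions with Lemmas \ref{l0} and \ref{l1} via the neighborhood bound (\ref{n2}), treating the two assertions separately.

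For the first assertion, I will argue by contradiction. Since $\delta(G) \ge 3$ is already known, it suffices by symmetry to rule out $d_G(v_1) = 3$ with $N_G(v_1) = \{v_2, v_3, w\}$. The witness will be the graph $M$ obtained from $G$ by deleting $v_1$ and (if necessary) adding the edge $v_2 w$. Because $v_1$ has only three neighbors, any two of them are cyclically consecutive in its rotation, so the new edge can be routed inside the face vacated by $v_1$ without violating planarity. Every vertex in $M$ has degree no larger than in $G$, so $\Delta(M) \le 5$. For condition (i) of $\mathcal{G}(v_1)$, the edge $v_2 v_3$ of the triangle, the edge $v_2 w$ now present, and the path $v_3 v_2 w$ jointly realize the pairwise distance-2 bound. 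Condition (iii) is immediate. Lemma \ref{l0} then forces $|N_G^2(v_1)| \ge 17$, contradicted by (\ref{n2}) with $f_3(v_1) \ge 1$: $|N_G^2(v_1)| \le 3 \cdot 5 - 2 = 13$.

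For the second assertion, I will suppose for contradiction that $d_G(v_1) = d_G(v_2) = d_G(v_3) = 4$ and violate Lemma \ref{l1} on the edge $v_1 v_2$. Writing $N_G(v_1) = \{v_2, v_3, w_1, w_2\}$ with $d_G(v_2) = d_G(v_3) = 4$ and $d_G(w_i) \le 5$, and using $f_3(v_1) \ge 1$, inequality (\ref{n2}) gives
\[
|N_G^2(v_1)| \le (4 + 4 + 5 + 5) - 2 = 16,
\]
and a parallel calculation at $v_2$ (whose neighborhood contains the degree-4 vertices $v_1,v_3$) yields $|N_G^2(v_2)| \le 16$. Lemma \ref{l1} on $v_1 v_2$ then demands either $\max\{|N_G^2(v_1)|, |N_G^2(v_2)|\} \ge 18$ or $\min\{|N_G^2(v_1)|, |N_G^2(v_2)|\} \ge 17$; both fail.

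The main subtlety will be confirming $M \in \mathcal{G}(v_1)$ in the first part---specifically the planarity and pairwise distance conditions after the edge addition---but these are mild obligations here because $v_1$ has only three neighbors and two of them already form an edge on the given 3-face. The second part is essentially a one-line degree count once Lemma \ref{l1} is available.
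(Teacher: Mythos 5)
Your proof is correct. The first assertion is handled exactly as in the paper: delete $v_1$, add the edge $v_2w$ between the two non-adjacent neighbors, verify membership in $\mathcal{G}(v_1)$, and contradict Lemma \ref{l0} via the bound $|N^2(v_1)|\le 13$ from (\ref{n2}). For the second assertion you diverge from the paper in a genuine way: the paper again builds an auxiliary graph, $M=G-v_1+\{v_2v_4,v_2v_5\}$, checks $M\in\mathcal{G}(v_1)$ (which requires noting that $d_M(v_2)=5$ is still admissible and that all pairwise distances among the four neighbors pass through $v_2$), and contradicts Lemma \ref{l0} with $|N^2(v_1)|\le 16$. You instead apply Lemma \ref{l1} to the edge $v_1v_2$ of the triangle: since each of $v_1,v_2$ has two degree-$4$ neighbors on the triangle and lies on a $3$-face, (\ref{n2}) gives $|N^2(v_1)|,|N^2(v_2)|\le 16$, which violates both alternatives of Lemma \ref{l1}. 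Your route avoids the planarity and distance verifications entirely at the cost of invoking the two-vertex recoloring lemma; the paper's route stays within the single-vertex framework of Lemma \ref{l0}. Both are valid, and yours is arguably the lighter argument for this particular step.
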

\begin{proof}
    By contradiction, assume $d_G(v_1)=3$ and let $v_4$ be a neighbor of $v_1$ other than $v_2,v_3$ (see Figure \ref{fig:1}(a)). Take $M=G-v_1+\{v_2v_4\}$. By simple computation, we have $d_M(v_i,v_j)\le 2$ for each pair $i,j\in \{2,3,4\}$,  $\Delta(M)\le\Delta(G)\le5$ and $|V(M)|+E(M)|\le |V(G)|+|E(G)|$. Thus we have $M\in \mathcal{G}(v_1)$. By (\ref{n2}), we get $|N^2(v_1)|\le \sum_{i=2}^{4}d_G(v_i)-2f_3(v_1)\le 13$, a contradiction to Lemma \ref{l0}.

     Next suppose $d_G(v_i)=4$ for each $i$ with $1 \le i\le 3$ and let the neighbors of $v_1$ be $v_2,v_3,v_5,v_4$ in the anticlockwise (see Figure \ref{fig:1}(b)). Let $M=G-v_1+\{v_2v_4,v_2v_5\}$. We get $M\in \mathcal{G}(v_1)$ by simple computation. By (\ref{n2}), we have  $|N^2(v_1)|\le\sum_{i=2}^{5}d_G(v_i)-2f_3(v_1)\le 16$, a contradiction to Lemma \ref{l0}.
    \end{proof}
\begin{figure}[h]
\centering
    \subfloat[]{
    \definecolor{aqaqaq}{rgb}{0.6274509803921569,0.6274509803921569,0.6274509803921569}
\begin{tikzpicture}[line cap=round,line join=round,>=triangle 45,x=0.5cm,y=0.5cm]
\clip(-3.,-2.5) rectangle (3.,3.3);
\draw [line width=0.8pt] (0.,0.)-- (-1.7320508075688774,-1.);
\draw [line width=0.8pt] (0.,0.)-- (1.7320508075688767,-1.);
\draw [line width=0.8pt] (0.,2.)-- (0.,0.);
\draw (0.06,0.6) node[anchor=north west] {$v_1$};
\draw (-2.7,-0.8) node[anchor=north west] {$v_2$};
\draw (1.6,-0.8) node[anchor=north west] {$v_3$};
\draw (-0.5,2.9) node[anchor=north west] {$v_4$};
\draw [line width=0.8pt,dotted,color=aqaqaq] (0.,2.)-- (-1.7320508075688774,-1.);
\draw [line width=0.8pt,dotted,color=aqaqaq] (0.,0.) circle (0.15cm);
\draw [line width=0.8pt] (-1.7320508075688774,-1.)-- (1.7320508075688767,-1.);
\begin{scriptsize}
\draw [fill=black] (0.,0.) circle (1.0pt);
\draw [fill=black] (0.,2.) circle (1.0pt);
\draw [fill=black] (-1.7320508075688774,-1.) circle (1.0pt);
\draw [fill=black] (1.7320508075688767,-1.) circle (1.0pt);
\end{scriptsize}
\end{tikzpicture}}
    \subfloat[]{
\definecolor{cqcqcq}{rgb}{0.7529411764705882,0.7529411764705882,0.7529411764705882}
\definecolor{aqaqaq}{rgb}{0.6274509803921569,0.6274509803921569,0.6274509803921569}
\begin{tikzpicture}[line cap=round,line join=round,>=triangle 45,x=0.5cm,y=0.5cm]
\clip(-3.,-3.) rectangle (3.,3.);
\draw [line width=0.8pt] (0.,0.)-- (-1.7320508075688774,-1.);
\draw [line width=0.8pt] (0.,0.)-- (1.7320508075688767,-1.);
\draw (-0.5,1.1) node[anchor=north west] {$v_1$};
\draw (-2.8,-0.8) node[anchor=north west] {$v_2$};
\draw (1.6,-0.8) node[anchor=north west] {$v_3$};
\draw (-2.9,1.8) node[anchor=north west] {$v_4$};
\draw [line width=0.8pt] (-1.7320508075688772,1.)-- (0.,0.);
\draw [line width=0.8pt] (0.,0.)-- (1.7320508075688776,1.);
\draw (1.6,1.7) node[anchor=north west] {$v_5$};
\draw [line width=0.8pt,dotted,color=aqaqaq] (-1.7320508075688772,1.)-- (-1.7320508075688774,-1.);
\draw [line width=0.8pt,dotted,color=cqcqcq] (0.,0.) circle (0.15cm);
\draw [shift={(-1.4460409011024125,2.449750758907173)},line width=0.8pt,dotted,color=aqaqaq]  plot[domain=4.621629793443702:5.851372033610908,variable=\t]({1.*3.4640078831263077*cos(\t r)+0.*3.4640078831263077*sin(\t r)},{0.*3.4640078831263077*cos(\t r)+1.*3.4640078831263077*sin(\t r)});
\draw [line width=0.8pt] (-1.7320508075688774,-1.)-- (1.7320508075688767,-1.);
\begin{scriptsize}
\draw [fill=black] (0.,0.) circle (1.0pt);
\draw [fill=black] (-1.7320508075688774,-1.) circle (1.0pt);
\draw [fill=black] (1.7320508075688767,-1.) circle (1.0pt);
\draw [fill=black] (-1.7320508075688772,1.) circle (1.0pt);
\draw [fill=black] (1.7320508075688776,1.) circle (1.0pt);
\end{scriptsize}
\end{tikzpicture}}
    \subfloat[]{
\definecolor{aqaqaq}{rgb}{0.6274509803921569,0.6274509803921569,0.6274509803921569}
\begin{tikzpicture}[line cap=round,line join=round,>=triangle 45,x=0.5cm,y=0.5cm]
\clip(-3.,-2.5) rectangle (3.,3.3);
\draw [line width=0.8pt] (0.,0.)-- (-1.7320508075688774,-1.);
\draw [line width=0.8pt] (0.,0.)-- (1.7320508075688767,-1.);
\draw [line width=0.8pt] (0.,2.)-- (0.,0.);
\draw (0.06,0.6) node[anchor=north west] {$v$};
\draw (-2.7,-0.8) node[anchor=north west] {$v_2$};
\draw (1.6,-0.8) node[anchor=north west] {$v_3$};
\draw (-0.5,2.9) node[anchor=north west] {$v_1$};
\draw [line width=0.8pt,dotted,color=aqaqaq] (0.,2.)-- (-1.7320508075688774,-1.);
\draw [line width=0.8pt,dotted,color=aqaqaq] (0.,0.) circle (0.15cm);
\draw [line width=0.8pt,dotted,color=aqaqaq] (0.,2.)-- (1.7320508075688767,-1.);
\begin{scriptsize}
\draw [fill=black] (0.,0.) circle (1.0pt);
\draw [fill=black] (0.,2.) circle (1.0pt);
\draw [fill=black] (-1.7320508075688774,-1.) circle (1.0pt);
\draw [fill=black] (1.7320508075688767,-1.) circle (1.0pt);
\end{scriptsize}
\end{tikzpicture}}
    \subfloat[]{
\definecolor{aqaqaq}{rgb}{0.6274509803921569,0.6274509803921569,0.6274509803921569}
\begin{tikzpicture}[line cap=round,line join=round,>=triangle 45,x=0.5cm,y=0.5cm]
\clip(-3.,-3.) rectangle (3.,3.);
\draw [line width=0.8pt] (0.,0.)-- (-1.7320508075688774,-1.);
\draw [line width=0.8pt] (0.,0.)-- (1.7320508075688767,-1.);
\draw [line width=0.8pt] (0.,2.)-- (0.,0.);
\draw (0.06355179567919868,0.5250960380835304) node[anchor=north west] {$v$};
\draw (-2.7,-0.8) node[anchor=north west] {$v_2$};
\draw (1.6,-0.8) node[anchor=north west] {$v_3$};
\draw (-0.5,2.9) node[anchor=north west] {$v_1$};
\draw [line width=0.8pt,dotted,color=aqaqaq] (0.,0.) circle (0.15cm);
\draw [line width=0.8pt,dotted,color=aqaqaq] (0.,2.)-- (1.7320508075688767,-1.);
\draw [line width=0.8pt] (-1.7320508075688774,1.)-- (-1.7320508075688774,-1.);
\draw [line width=0.8pt] (-1.7320508075688774,1.)-- (0.,2.);
\draw [line width=0.8pt] (-1.7320508075688774,-1.)-- (0.,-2.);
\draw [line width=0.8pt] (0.,-2.)-- (1.7320508075688767,-1.);
\draw (-2.6,1.5) node[anchor=north west] {$x$};
\draw (-0.4,-1.9278102105154634) node[anchor=north west] {$y$};
\begin{scriptsize}
\draw [fill=black] (0.,0.) circle (1.0pt);
\draw [fill=black] (0.,2.) circle (1.0pt);
\draw [fill=black] (-1.7320508075688774,-1.) circle (1.0pt);
\draw [fill=black] (1.7320508075688767,-1.) circle (1.0pt);
\draw [fill=black] (-1.7320508075688774,1.) circle (1.0pt);
\draw [fill=black] (0.,-2.) circle (1.0pt);
\end{scriptsize}
\end{tikzpicture}
}
\caption{The graphs of Lemmas \ref{l2} and \ref{l3}}
    \label{fig:1}
\end{figure}
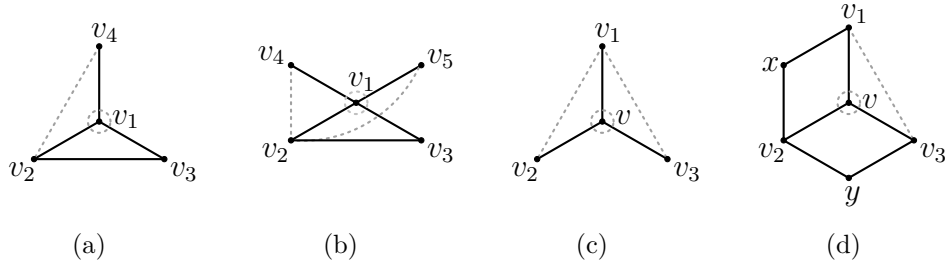
    
\begin{lemma}\label{l3}
Let $v\in V(G)$ be a $3$-vertex with $N_G(v)=\{v_1,v_2,v_3\}$. Then $d_G(v_i)=5$ for all $i$ with $ 1\le i\le3$ and $f_4(v)\le1$. 
\end{lemma}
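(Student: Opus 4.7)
The plan is to prove both parts by contradiction, in each case deleting $v$ and inserting a small set of chords among $v_1,v_2,v_3$ to obtain some $M\in\mathcal{G}(v)$, so that Lemma \ref{l0} forces $|N^2(v)|\ge 17$; this will be contradicted by the upper bound coming from inequality (\ref{n2}). Suppose first that some neighbor, say $v_1$, satisfies $d_G(v_1)\le 4$. I would take $M=G-v+\{v_1v_2,v_1v_3\}$, adding only those chords not already present in $G$. Removing $v$ merges its three incident faces into a single face of $G-v$, inside which the new chords can be drawn without crossings, so $M$ is planar; the degree bound $d_M(v_1)\le d_G(v_1)-1+2\le 5$ (with $d_M(v_i)=d_G(v_i)\le 5$ for $i=2,3$) gives $\Delta(M)\le 5$; and every pair $v_i,v_j$ has $d_M(v_i,v_j)\le 2$ (either as a direct new chord or via the length-two path $v_2v_1v_3$). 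Thus $M\in\mathcal{G}(v)$, and Lemma \ref{l0} combined with (\ref{n2}) yields
\[
17\le |N^2(v)|\le d_G(v_1)+d_G(v_2)+d_G(v_3)\le 4+5+5=14,
\]
a contradiction, establishing $d_G(v_i)=5$ for $i\in\{1,2,3\}$.

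Next suppose for contradiction that $f_4(v)\ge 2$. Since $v$ is incident to exactly three faces and any two of them share an edge $vv_i$, the two 4-faces must share some edge $vv_j$; after relabeling, they are $[v_1xv_2v]$ and $[v_2yv_3v]$, giving length-two paths $v_1xv_2$ and $v_2yv_3$ in $G-v$. I would then set $M=G-v+\{v_1v_3\}$ when $f_4(v)=2$, and $M=G-v$ when $f_4(v)=3$ (in the latter case all three pairs of neighbors are already at distance two via 4-face paths). The chord $v_1v_3$, when needed, is drawn inside the face merged at $v$; using that each $v_i$ has degree $5$ by the first part, one checks $\Delta(M)\le 5$, and the pairwise distances are clear. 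Hence $M\in\mathcal{G}(v)$, and Lemma \ref{l0} forces $|N^2(v)|\ge 17$. But from $f_3(v)+f_4(v)\le 3$ we have $f_3(v)\le 1$, so (\ref{n2}) gives
\[
|N^2(v)|\le 15-2f_3(v)-f_4(v)\le 13<17,
\]
a contradiction, proving $f_4(v)\le 1$.

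The main care will lie in verifying the three defining conditions of $\mathcal{G}(v)$ in every subcase: avoiding multi-edges when a proposed chord already lies in $G$ (which only lowers degrees further and tightens distances, so never hurts), keeping $\Delta(M)\le 5$ (the tight place is $v_1$ in Part~1, where the assumption $d_G(v_1)\le 4$ leaves exactly enough slack), and preserving planarity via the merged face at $v$. I do not expect a serious obstacle beyond a clean bookkeeping of these subcases; the heavy lifting is done by inequality (\ref{n2}), which has ample slack in both parts.
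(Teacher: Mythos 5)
Your proposal is correct and follows essentially the same route as the paper: for the degree claim it uses $M=G-v+\{v_1v_2,v_1v_3\}$ with the bound $|N^2(v)|\le\sum_i d_G(v_i)\le 14$, and for $f_4(v)\le 1$ it uses $M=G-v+\{v_1v_3\}$ with $|N^2(v)|\le 13$, both contradicting Lemma \ref{l0} via (\ref{n2}). The only differences are cosmetic extra care (explicitly treating $f_4(v)=3$ and the possibility of pre-existing chords), which the paper leaves implicit.
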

\begin{proof}
Assume by contradiction that $d_G(v_1)\le4$. Let $M=G-v+\{v_1v_2,v_1v_3\}$(see Figure \ref{fig:1}(c)). We have $d_M(v_i)=d_G(v_i)$ for $2\le i\le3$, $d_M(v_1)=d_G(v_1)+1\le5$. Thus $M\in \mathcal{G}(v)$. By (\ref{n2}), $ |N^2(v)|\le\sum_{i=1}^{i=3}d_{G}(v_i)\le 14$, a contradiction to Lemma \ref{l0}.

Next suppose $f_4(v)=2$ and denote by $[vv_1xv_2]$ and $[vv_2yv_3]$ the two $4$-faces (see Figure \ref{fig:1}(d)). Let $M=G-v+\{v_1v_3\}\in \mathcal{G}(v)$. We obtain a contradiction to Lemma \ref{l0}, since $ |N^2(v)|\le \sum_{i=1}^{3}d_{G}(v_i)-f_{4}(v)\le 13$ by (\ref{n2}). \end{proof}

\begin{lemma}\label{l4}
Let $v\in V(G)$ be a $k$-vertex with $N_G(v)=\{v_1,v_2,\dots, v_k\}$. Then we have

(i) If $k=4$, then $f_3(v)\le1$; 

(ii) If $k=5$, then $f_3(v)\le4$.
\end{lemma}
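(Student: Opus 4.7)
The proof of both parts is by contradiction and follows the same template as Lemmas \ref{l2} and \ref{l3}: construct a graph $M \in \mathcal{G}(v)$, apply Lemma \ref{l0} to force $|N^2(v)| \ge 17$, and then derive a contradiction from the bound (\ref{n2}).

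For part (i), I will assume to the contrary that $v$ is a $4$-vertex with $f_3(v) \ge 2$ and list its neighbors $v_1, v_2, v_3, v_4$ in cyclic order around $v$ in the plane embedding. Up to relabeling, two of the $3$-faces incident with $v$ fit one of two configurations: adjacent, say $[vv_1v_2]$ and $[vv_2v_3]$, giving $v_1v_2, v_2v_3 \in E(G)$; or opposite, $[vv_1v_2]$ and $[vv_3v_4]$, giving $v_1v_2, v_3v_4 \in E(G)$. In the adjacent case I will set $M = G - v + \{v_2v_4\}$: then $v_2$ becomes a common neighbor of each of $v_1, v_3, v_4$ in $M$, so $d_M(v_i, v_j) \le 2$ for every pair. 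In the opposite case I will set $M = G - v + \{v_2v_3, v_4v_1\}$, which completes the $4$-cycle $v_1v_2v_3v_4v_1$ in $M$ and again puts every pair of former neighbors of $v$ within distance $2$. In both constructions each added edge can be routed in a face formerly incident with $v$; each $v_i$ gains at most one new edge while losing the edge $vv_i$, so $\Delta(M) \le 5$; and $|V(M)| + |E(M)| < |V(G)| + |E(G)|$ since at most $2$ edges are added while $4$ are removed. Hence $M \in \mathcal{G}(v)$, and Lemma \ref{l0} gives $|N^2(v)| \ge 17$. But (\ref{n2}), combined with $d_G(v_i) \le 5$ and $f_3(v) \ge 2$, yields
\[
|N^2(v)| \le \sum_{i=1}^{4} d_G(v_i) - 2 f_3(v) \le 20 - 4 = 16,
\]
a contradiction.

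For part (ii), I will suppose $v$ is a $5$-vertex with $f_3(v) = 5$. Then every face around $v$ is a triangle, forcing each consecutive pair $v_iv_{i+1}$ of neighbors (indices modulo $5$) to be an edge of $G$, so $v_1, \ldots, v_5$ form a $5$-cycle in $G$. Let $M = G - v$, adding no new edges. Then $M$ is planar, $\Delta(M) \le 5$, and $|V(M)| + |E(M)| < |V(G)| + |E(G)|$. Since every two vertices on a $5$-cycle lie at distance at most $2$, we have $d_M(v_i, v_j) \le 2$ for all pairs, so $M \in \mathcal{G}(v)$. Lemma \ref{l0} forces $|N^2(v)| \ge 17$, yet (\ref{n2}) gives
\[
|N^2(v)| \le \sum_{i=1}^{5} d_G(v_i) - 2 f_3(v) \le 25 - 10 = 15,
\]
a contradiction.

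The main obstacle is part (i): the auxiliary edges must be chosen carefully so that the resulting $M$ simultaneously satisfies all three conditions in the definition of $\mathcal{G}(v)$ (planarity, $\Delta(M) \le 5$, and pairwise distance at most $2$ among the former neighbors of $v$), and this requires handling both topological placements of the two $3$-faces separately. Once $M$ is exhibited in each case, the contradiction falls out via the same short use of Lemma \ref{l0} and (\ref{n2}) as in the earlier lemmas of this subsection.
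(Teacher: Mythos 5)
Your proposal is correct and matches the paper's argument essentially verbatim: the same two case distinctions for part (i) with the same auxiliary graphs $M=G-v+\{v_2v_4\}$ (adjacent triangles) and $M=G-v+\{v_1v_4,v_2v_3\}$ (disjoint triangles), the same $M=G-v$ for part (ii), and the same concluding counts via (\ref{n2}) and Lemma \ref{l0}.
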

\begin{proof}
(i) By contradiction, assume that $f_3(v)=2$. There are two cases to discuss according to whether the two $3$-faces are edge-disjoint or not. If there are two $3$-faces $[vv_1v_2],[vv_2v_3]$ in $G$ (see Figure \ref{fig:2}(a)), then let $M=G-v+\{v_2v_4\}$. Thus $M\in \mathcal{G}(v)$. By (\ref{n2}), we have $ |N^2(v)|\le\sum_{i=1}^{i=4}d_{G}(v_i)-2f_3(v)\le 16$, a contradiction to Lemma \ref{l0}. 

\begin{figure}[h]
\centering
    \subfloat[]{
\definecolor{aqaqaq}{rgb}{0.6274509803921569,0.6274509803921569,0.6274509803921569}
\begin{tikzpicture}[line cap=round,line join=round,>=triangle 45,x=0.5cm,y=0.5cm]
\clip(-3.,-3.) rectangle (3.,3.);
\draw [line width=0.8pt] (0.,2.)-- (0.,-2.);
\draw [line width=0.8pt] (-2.,0.)-- (2.,0.);
\draw (2,0.5) node[anchor=north west] {$v_1$};
\draw (-0.1,0.8) node[anchor=north west] {$v$};
\draw (-0.5,2.9) node[anchor=north west] {$v_2$};
\draw (-3.3,0.5) node[anchor=north west] {$v_3$};
\draw (-0.5,-1.9) node[anchor=north west] {$v_4$};
\draw [line width=0.8pt] (0.,2.)-- (-2.,0.);
\draw [line width=0.8pt] (0.,2.)-- (2.,0.);
\draw [line width=0.8pt,dotted,color=aqaqaq] (0.,0.) circle (0.15cm);
\draw [shift={(-2.4133333333333336,0.)},line width=0.8pt,dotted,color=aqaqaq]  plot[domain=-0.6920149713822452:0.6920149713822449,variable=\t]({1.*3.134354443546195*cos(\t r)+0.*3.134354443546195*sin(\t r)},{0.*3.134354443546195*cos(\t r)+1.*3.134354443546195*sin(\t r)});
\begin{scriptsize}
\draw [fill=black] (0.,0.) circle (1.0pt);
\draw [fill=black] (-2.,0.) circle (1.0pt);
\draw [fill=black] (2.,0.) circle (1.0pt);
\draw [fill=black] (0.,2.) circle (1.0pt);
\draw [fill=black] (0.,-2.) circle (1.0pt);
\end{scriptsize}
\end{tikzpicture}}
\hspace{2em}
    \subfloat[]{
    \definecolor{aqaqaq}{rgb}{0.6274509803921569,0.6274509803921569,0.6274509803921569}
\begin{tikzpicture}[line cap=round,line join=round,>=triangle 45,x=0.5cm,y=0.5cm]
\clip(-3.,-3.) rectangle (3.,3.);
\draw [line width=0.8pt] (0.,2.)-- (0.,-2.);
\draw [line width=0.8pt] (-2.,0.)-- (2.,0.);
\draw (2,0.5) node[anchor=north west] {$v_1$};
\draw (0.06,0.8) node[anchor=north west] {$v$};
\draw (-0.5,2.9) node[anchor=north west] {$v_2$};
\draw (-3.3,0.5) node[anchor=north west] {$v_3$};
\draw (-0.5,-1.9) node[anchor=north west] {$v_4$};
\draw [line width=0.8pt] (0.,2.)-- (2.,0.);
\draw [line width=0.8pt,dotted,color=aqaqaq] (0.,0.) circle (0.15cm);
\draw [line width=0.8pt] (-2.,0.)-- (0.,-2.);
\draw [line width=0.8pt,dotted,color=aqaqaq] (0.,2.)-- (-2.,0.);
\draw [line width=0.8pt,dotted,color=aqaqaq] (2.,0.)-- (0.,-2.);
\begin{scriptsize}
\draw [fill=black] (0.,0.) circle (1.0pt);
\draw [fill=black] (-2.,0.) circle (1.0pt);
\draw [fill=black] (2.,0.) circle (1.0pt);
\draw [fill=black] (0.,2.) circle (1.0pt);
\draw [fill=black] (0.,-2.) circle (1.0pt);
\end{scriptsize}
\end{tikzpicture}}
\hspace{2em}
    \subfloat[]{
    \definecolor{aqaqaq}{rgb}{0.6274509803921569,0.6274509803921569,0.6274509803921569}
\definecolor{sqsqsq}{rgb}{0.12549019607843137,0.12549019607843137,0.12549019607843137}
\begin{tikzpicture}[line cap=round,line join=round,>=triangle 45,x=0.6cm,y=0.6cm]
\clip(-2.5,-2.) rectangle (2.5,3.);
\draw [line width=0.8pt] (0.,2.077683537175253)-- (0.,0.37638192047117347);
\draw [line width=0.8pt] (-1.6180339887498947,0.9021130325903073)-- (0.,0.37638192047117347);
\draw [line width=0.8pt] (0.,0.37638192047117347)-- (-1.,-1.);
\draw [line width=0.8pt] (0.,0.37638192047117347)-- (1.,-1.);
\draw [line width=0.8pt] (0.,0.37638192047117347)-- (1.618033988749895,0.9021130325903065);
\draw (1.5,1.3) node[anchor=north west] {$v_1$};
\draw (-0.5,2.9) node[anchor=north west] {$v_2$};
\draw (-2.6,1.2679725201426373) node[anchor=north west] {$v_3$};
\draw (-1.6,-0.9000098641126053) node[anchor=north west] {$v_4$};
\draw (0.8,-0.8545277161911666) node[anchor=north west] {$v_5$};
\draw [line width=0.8pt] (-1.,-1.)-- (1.,-1.);
\draw [line width=0.8pt] (-1.6180339887498947,0.9021130325903073)-- (-1.,-1.);
\draw [line width=0.8pt] (-1.6180339887498947,0.9021130325903073)-- (0.,2.077683537175253);
\draw [line width=0.8pt] (0.,2.077683537175253)-- (1.618033988749895,0.9021130325903065);
\draw [line width=0.8pt] (1.618033988749895,0.9021130325903065)-- (1.,-1.);
\draw (0.08646136107768357,0.6767045971639347) node[anchor=north west] {$v$};
\draw [line width=0.8pt,dotted,color=aqaqaq] (0.,0.37638192047117347) circle (0.18cm);
\begin{scriptsize}
\draw [fill=sqsqsq] (-1.,-1.) circle (1.0pt);
\draw [fill=sqsqsq] (1.,-1.) circle (1.0pt);
\draw [fill=sqsqsq] (1.618033988749895,0.9021130325903065) circle (1.0pt);
\draw [fill=sqsqsq] (0.,2.077683537175253) circle (1.0pt);
\draw [fill=sqsqsq] (-1.6180339887498947,0.9021130325903073) circle (1.0pt);
\draw [fill=sqsqsq] (0.,0.37638192047117347) circle (1.0pt);
\end{scriptsize}
\end{tikzpicture}}
    
    \caption{The graphs of Lemma \ref{l4}}
    \label{fig:2}
\end{figure}
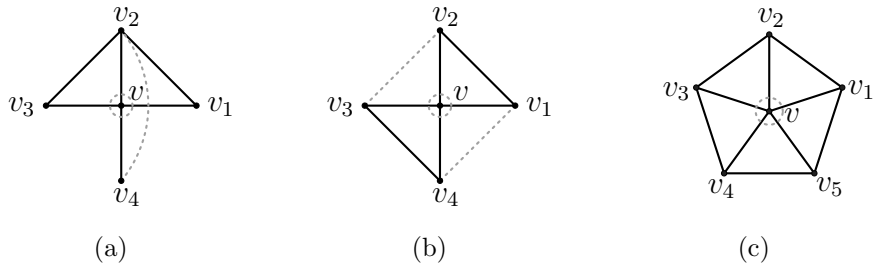

Otherwise, let $[vv_1v_2]$ and $[vv_3v_4]$ be the two $3$-$faces$  (see Figure \ref{fig:2}(b)). We can verify that $M=G-v+\{v_1v_4,v_2v_3\}\in \mathcal{G}(v)$. By (\ref{n2}), we have $ |N^2(v)|\le\sum_{i=1}^{i=4}d_{G}(v_i)-2f_3(v)\le 16$, which is a contradiction to Lemma \ref{l0}. 

(ii) Then we prove $f_3(v)\le4$, if $d_G(v)=5$. By contradiction, suppose $f_3(v)=5$ and we have $v_5v_1,v_iv_{i+1}\in E(G)$ for each $i$ with $1\le i\le 4$ (see Figure \ref{fig:2}(c)). Let $M=G-v$. Then, it is routine to verify $M\in \mathcal{G}(v)$, and so we have $ |N^2(v)|\le\sum_{i=1}^{i=5}d_{G}(v_i)-2f_3(v)\le 15$ by (\ref{n2}). There is a contradiction to Lemma \ref{l0}.\end{proof}

\begin{lemma}\label{l5}
Let $v\in V(G)$ be a $4$-vertex with $N_G(v)=\{v_1,v_2,v_3,v_4\}$. If $v$ is contained in one triangle $[vv_1v_2]$, then we have

(i) $f_4(v)\le2.$ 

(ii) If $f_4(v)=2$, then $d(v_i)=5$ for all i with $ 1\le i\le 4$.
\end{lemma}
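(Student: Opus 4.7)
The plan is to apply the deletion-and-edge-addition machinery used in Lemmas \ref{l2}--\ref{l4}: in each forbidden configuration I will construct a planar graph $M\in\mathcal{G}(v)$ by removing $v$ and inserting at most two chords among its former neighbors, invoke Lemma \ref{l0} to force $|N^2(v)|\ge 17$, and then contradict this bound via (\ref{n2}).

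For (i), suppose $f_4(v)=3$, so the three $4$-faces at $v$ are $[vv_2xv_3]$, $[vv_3yv_4]$, and $[vv_4zv_1]$. I would take $M=G-v+\{v_2v_3,v_1v_4\}$, adding only those edges not already in $G$. Because $v_1,v_2,v_3,v_4$ appear in this cyclic order on the boundary of the merged face at $v$, the two new chords are non-interleaved and hence non-crossing, so $M$ is planar; each $v_i$ loses its edge to $v$ and gains at most one new chord, keeping $\Delta(M)\le 5$; and the pairs $\{v_1,v_3\},\{v_2,v_4\},\{v_3,v_4\}$ fall within distance $2$ through $v_2$, $v_1$, and $y$, respectively. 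Hence $M\in\mathcal{G}(v)$, so Lemma \ref{l0} gives $|N^2(v)|\ge 17$, while (\ref{n2}) bounds $|N^2(v)|\le 4\cdot 5-2\cdot 1-3=15$, a contradiction.

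For (ii), suppose $f_4(v)=2$ and some $d(v_i)\le 4$; I would split into two sub-cases by the cyclic positions of the $4$-faces. In Case~A, the two $4$-faces are consecutive around $v$ (sharing $vv_3$ or $vv_4$), so $\{v_3,v_4\}$ still shares a common neighbor in $G-v$, and the construction $M=G-v+\{v_2v_3,v_1v_4\}$ from (i) continues to work. Then $|N^2(v)|\le\sum d(v_i)-4\le 16<17$ contradicts Lemma \ref{l0} (so Case~A is in fact impossible even without invoking the degree hypothesis). In Case~B, the two $4$-faces are $[vv_2xv_3]$ and $[vv_4zv_1]$, with the face between $vv_3$ and $vv_4$ of degree $\ge 5$, so the pair $\{v_3,v_4\}$ also requires fixing and the low-degree neighbor must absorb the extra incidence. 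Accordingly, I would set $M=G-v+\{v_1v_3,v_1v_4\}$ if $d(v_1)\le 4$; $M=G-v+\{v_2v_3,v_2v_4\}$ if $d(v_2)\le 4$; $M=G-v+\{v_2v_3,v_3v_4\}$ if $d(v_3)\le 4$; and $M=G-v+\{v_1v_4,v_3v_4\}$ if $d(v_4)\le 4$. In each case the two added chords share a common endpoint, which makes planarity immediate and confines the degree increment to the low-degree vertex, while the three problem pairs reach distance $\le 2$ via this endpoint, the triangle edge $v_1v_2$, and the $4$-face auxiliaries $x,z$. Thus $M\in\mathcal{G}(v)$, and combining Lemma \ref{l0} with (\ref{n2}) forces $|N^2(v)|\le (3\cdot 5+4)-4=15<17$, the required contradiction.

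The main obstacle will be the simultaneous verification of planarity and of all six pairwise distances under each chord choice; these are handled uniformly by observing that in (i) and Case~A the two added chords are non-interleaved on the boundary cyclic order and hence non-crossing, while in Case~B the two added chords share a common endpoint, so that every case reduces to a short distance check through the triangle edge $v_1v_2$, the new common endpoint, or an auxiliary $4$-face vertex.
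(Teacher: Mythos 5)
Your proof is correct and follows essentially the same strategy as the paper's: delete $v$, add non-crossing chords among its former neighbors, and play Lemma \ref{l0} against inequality (\ref{n2}), with the same bound $15$ in part (i) via $M=G-v+\{v_2v_3,v_1v_4\}$. The only divergence is in the edge-disjoint subcase of (ii) for a low-degree $v_1$ or $v_2$, where the paper instead invokes Lemma \ref{l1} through a count on $|N^2(v_2)|\le 16$, while your direct modification with both chords at the low-degree vertex (e.g.\ $G-v+\{v_2v_3,v_2v_4\}$) works equally well; your side observation that the shared-edge subcase is impossible outright (the bound is $16<17$ even with all $d(v_i)=5$) is also valid, though the paper does not state it.
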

\begin{proof}
(i) First we prove $f_4(v)\le2$, when $f_3(v)=1$. If $f_4(v)=3$, then denote by $[vv_2xv_3],[vv_3yv_4]$ $[vv_4zv_1]$ the three $4$-faces (see Figure \ref{fig:3}(a)). Let $M=G-v+\{v_2v_3,v_1v_4\}$. We can verify $M\in \mathcal{G}(v)$. By (\ref{n2}), we have $ |N^2(v)|\le\sum_{i=1}^{i=4}d_{G}(v_i)-f_4(v)-2f_3(v)\le 15$,
a contradiction to Lemma \ref{l0}.



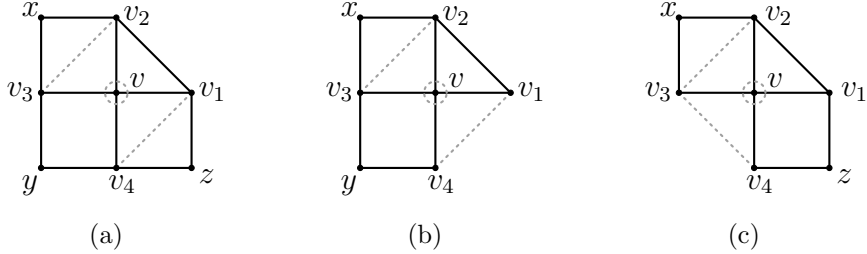
\begin{figure}[h]
\centering
    \subfloat[]{
    \definecolor{aqaqaq}{rgb}{0.6274509803921569,0.6274509803921569,0.6274509803921569}
\begin{tikzpicture}[line cap=round,line join=round,>=triangle 45,x=0.5cm,y=0.5cm]
\clip(-3.,-3.) rectangle (3.,3.);
\draw [line width=0.8pt] (0.,2.)-- (0.,-2.);
\draw [line width=0.8pt] (-2.,0.)-- (2.,0.);
\draw (1.9,0.56) node[anchor=north west] {$v_1$};
\draw (0.06,0.8) node[anchor=north west] {$v$};
\draw (-0.1,2.58) node[anchor=north west] {$v_2$};
\draw (-3.2,0.56) node[anchor=north west] {$v_3$};
\draw (-0.5,-1.9) node[anchor=north west] {$v_4$};
\draw [line width=0.8pt] (-2.,2.)-- (-2.,-2.);
\draw [line width=0.8pt] (-2.,-2.)-- (2.,-2.);
\draw [line width=0.8pt] (-2.,2.)-- (0.,2.);
\draw [line width=0.8pt] (2.,0.)-- (2.,-2.);
\draw [line width=0.8pt] (2.,0.)-- (0,2.);
\draw (-2.8,2.7) node[anchor=north west] {$x$};
\draw (-2.8,-1.9) node[anchor=north west] {$y$};
\draw (1.9,-1.76) node[anchor=north west] {$z$};
\draw [line width=0.8pt,dotted,color=aqaqaq] (0.,2.)-- (-2.,0.);
\draw [line width=0.8pt,dotted,color=aqaqaq] (2.,0.)-- (0.,-2.);
\draw [line width=0.8pt,dotted,color=aqaqaq] (0.,0.) circle (0.15cm);
\begin{scriptsize}
\draw [fill=black] (0.,0.) circle (1.0pt);
\draw [fill=black] (-2.,0.) circle (1.0pt);
\draw [fill=black] (2.,0.) circle (1.0pt);
\draw [fill=black] (0.,2.) circle (1.0pt);
\draw [fill=black] (0.,-2.) circle (1.0pt);
\draw [fill=black] (-2.,2.) circle (1.0pt);
\draw [fill=black] (-2.,-2.) circle (1.0pt);
\draw [fill=black] (2.,-2.) circle (1.0pt);
\end{scriptsize}
\end{tikzpicture}}
\hspace{2em}
    \subfloat[]{
    \definecolor{aqaqaq}{rgb}{0.6274509803921569,0.6274509803921569,0.6274509803921569}
\begin{tikzpicture}[line cap=round,line join=round,>=triangle 45,x=0.5cm,y=0.5cm]
\clip(-3.,-3.) rectangle (3.,3.);
\draw [line width=0.8pt] (0.,2.)-- (0.,-2.);
\draw [line width=0.8pt] (-2.,0.)-- (2.,0.);
\draw (1.9,0.56) node[anchor=north west] {$v_1$};
\draw (0.06,0.8) node[anchor=north west] {$v$};
\draw (-0.1,2.58) node[anchor=north west] {$v_2$};
\draw (-3.2,0.56) node[anchor=north west] {$v_3$};
\draw (-0.5,-1.9) node[anchor=north west] {$v_4$};
\draw [line width=0.8pt] (-2.,2.)-- (-2.,-2.);
\draw [line width=0.8pt] (-2.,2.)-- (0.,2.);
\draw [line width=0.8pt] (2.,0.)-- (0,2.);
\draw (-2.8,2.7) node[anchor=north west] {$x$};
\draw (-2.8,-1.9) node[anchor=north west] {$y$};
\draw [line width=0.8pt,dotted,color=aqaqaq] (0.,2.)-- (-2.,0.);
\draw [line width=0.8pt,dotted,color=aqaqaq] (2.,0.)-- (0.,-2.);
\draw [line width=0.8pt,dotted,color=aqaqaq] (0.,0.) circle (0.15cm);
\draw [line width=0.8pt] (-2.,-2.)-- (0.,-2.);
\begin{scriptsize}
\draw [fill=black] (0.,0.) circle (1.0pt);
\draw [fill=black] (-2.,0.) circle (1.0pt);
\draw [fill=black] (2.,0.) circle (1.0pt);
\draw [fill=black] (0.,2.) circle (1.0pt);
\draw [fill=black] (0.,-2.) circle (1.0pt);
\draw [fill=black] (-2.,2.) circle (1.0pt);
\draw [fill=black] (-2.,-2.) circle (1.0pt);
\end{scriptsize}
\end{tikzpicture}}
\hspace{2em}
    \subfloat[]{
    \definecolor{aqaqaq}{rgb}{0.6274509803921569,0.6274509803921569,0.6274509803921569}
\begin{tikzpicture}[line cap=round,line join=round,>=triangle 45,x=0.5cm,y=0.5cm]
\clip(-3.,-3.) rectangle (3.,3.);
\draw [line width=0.8pt] (0.,2.)-- (0.,-2.);
\draw [line width=0.8pt] (-2.,0.)-- (2.,0.);
\draw (2.02,0.56) node[anchor=north west] {$v_1$};
\draw (0.06,0.8) node[anchor=north west] {$v$};
\draw (0.,2.58) node[anchor=north west] {$v_2$};
\draw (-3.2,0.56) node[anchor=north west] {$v_3$};
\draw (-0.5,-1.9) node[anchor=north west] {$v_4$};
\draw [line width=0.8pt] (-2.,2.)-- (0.,2.);
\draw [line width=0.8pt] (2.,0.)-- (2.,-2.);
\draw [line width=0.8pt] (2.,0.)-- (0,2.);
\draw (-2.8,2.7) node[anchor=north west] {$x$};
\draw (1.9,-1.76) node[anchor=north west] {$z$};
\draw [line width=0.8pt,dotted,color=aqaqaq] (0.,2.)-- (-2.,0.);
\draw [line width=0.8pt,dotted,color=aqaqaq] (0.,0.) circle (0.15cm);
\draw [line width=0.8pt] (-2.,2.)-- (-2.,0.);
\draw [line width=0.8pt] (0.,-2.)-- (2.,-2.);
\draw [line width=0.8pt,dotted,color=aqaqaq] (-2.,0.)-- (0.,-2.);
\begin{scriptsize}
\draw [fill=black] (0.,0.) circle (1.0pt);
\draw [fill=black] (-2.,0.) circle (1.0pt);
\draw [fill=black] (2.,0.) circle (1.0pt);
\draw [fill=black] (0.,2.) circle (1.0pt);
\draw [fill=black] (0.,-2.) circle (1.0pt);
\draw [fill=black] (-2.,2.) circle (1.0pt);
\draw [fill=black] (2.,-2.) circle (1.0pt);
\end{scriptsize}
\end{tikzpicture}}
    \caption{The graphs of Lemma \ref{l5}}
    \label{fig:3}
\end{figure}

(ii) If $v$ is contained in one triangle and two $4$-$faces$, then there will be two cases as the two $4$-faces may share an edge or may be edge disjoint. Let $[vv_2xv_3],[vv_3yv_4]$ be the two $4$-faces in $G$ (see Figure \ref{fig:3}(b)). Assume by contradiction that there is some $i\in\{1, 2, 3, 4\}$ such that $d_G(v_i)\le4$. Then $M=G-v+\{v_2v_3,v_1v_4\}\in \mathcal{G}(v)$. We obtain a contradiction to Lemma \ref{l0}, since $ |N^2(v)|\le \sum_{i=1}^{i=4}d_{G}(v_i)-2f_{3}(v)-f_{4}(v)\le 15$.

Therefore, we may assume that $[vv_2xv_3],[vv_4zv_1]$ are the two $4$-faces (see Figure \ref{fig:3}(c)). By symmetry, it suffices to prove that $d_G(v_2)=d_G(v_3)=5$. If $d_{G}(v_2)\le 4$, then $|N^2(v)|=\sum_{i=1}^{i=4}d_{G}(v_i)-2f_{3}(v)-f_{4}(v) \le 15$ and $|N^2(v_2)|=\sum_{u\in N_G(v_2))}d_G(u)-2f_3(v_2)-f_4(v_2)\le 16$, a contradiction to Lemma \ref{l1}. Thus $d_G(v_2)=5$. If $d_G(v_3)\le4$, then let $M=G-v+\{v_2v_3,v_3v_4\}$. By verifying $M\in \mathcal{G}(v)$, we have $ |N^2(v)|\le \sum_{i=1}^{i=4}d_{G}(v_i)-2f_{3}(v)-f_{4}(v)\le 15$, contrary to Lemma \ref{l0}.\end{proof}

\begin{lemma}\label{l6}
Let $v\in V(G)$ be a $5$-vertex with $N_G(v)=\{v_1,v_2,v_3,v_4,v_5\}$. Then each of the following holds:

(i) If $f_3(v)=2$, then $n_3(v)\le1$.
       
(ii) If $f_3(v)=3$, then $n_3(v)=0$. Moreover, (a) if $f_3(v)=3, f_4(v)=2$, then $n_4(v)=0$; (b) if $f_3(v)=3, f_4(v)=1$, then $n_4(v)\le1$.

(iii) If $f_3(v)=4$, then $f_4(v)=0$ and $d_G(v_i)=5$ for all $i$ with $1\le i\le5$. Moreover, every corner of bad $5$-vertex $v$ is not a bad $5$-vertex.
    
    \end{lemma}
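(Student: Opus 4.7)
The proof of Lemma \ref{l6} handles each of parts (i), (ii), and (iii) by contradiction, following the framework of Lemmas \ref{l2}--\ref{l5}. For each part I assume the claimed inequality fails and then either exhibit an auxiliary planar graph $M\in\mathcal{G}(v)$ (obtained from $G$ by deleting $v$ and inserting non-crossing chords inside the merged face around $v$) so that Lemma \ref{l0} forces $|N^2(v)|\ge 17$, or single out an edge $uv$ and apply Lemma \ref{l1}. The arithmetic input throughout is inequality (\ref{n2}).

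For (i), assuming $f_3(v)=2$ and $n_3(v)\ge 2$, Lemma \ref{l2} forces every 3-neighbor of $v$ to avoid the 3-faces at $v$, which together with a short case analysis on whether the two 3-faces of $v$ share an edge pins down the cyclic positions of the 3-neighbors; (\ref{n2}) then gives $|N^2(v)|\le 21-4=17$, while for any 3-neighbor $u$ Lemma \ref{l3} combined with (\ref{n2}) gives $|N^2(u)|\le 15$, violating Lemma \ref{l1}. The same 3-neighbor argument handles the main claim of (ii). For (ii)(a) and (ii)(b), the assumption on $n_4(v)$ combined with (\ref{n2}) yields $|N^2(v)|\le 16$; the $M\in\mathcal{G}(v)$ needed to invoke Lemma \ref{l0} is obtained by deleting $v$ and adding two non-crossing chords in the merged face around $v$, for example $v_2v_4$ together with $v_1v_5$ (with an appropriate relabelling to match the cyclic arrangement of the 3-faces and 4-faces), chosen so that every pair among $v_1,\dots,v_5$ falls within distance $2$ in $M$ and so that $\Delta(M)\le 5$.

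Part (iii) splits in three steps. The equalities $f_4(v)=0$ and $d(v_i)=5$ for every $i$ follow from the same chord construction: failure of either reduces (\ref{n2}) to $|N^2(v)|\le 16$, and a graph of the form $M=G-v+\{v_2v_4,v_1v_5\}$ sits in $\mathcal{G}(v)$, contradicting Lemma \ref{l0}. The last claim, that no corner of a bad 5-vertex is itself bad, is the main obstacle: when $v$ and its corner $u=v_1$ are both bad 5-vertices, (\ref{n2}) only returns $|N^2(v)|,|N^2(u)|\le 17$, which is consistent with Lemma \ref{l1}. The refinement I plan to use is structural. Because $v$ is bad with its lone $5^{+}$-face $F$ bounding the wedge $(v_5,v_1)$ and $u$ sits at the far end of $F$ along the edge $uv$, the face $F$ must also occupy one wedge at $u$; the triangle $[uvv_2]$ occupies the adjacent wedge at $u$, and badness of $u$ forces the remaining three wedges at $u$ to all be triangles, producing in particular a neighbor $a$ of $u$ satisfying $a\in N(v_2)\setminus\{v,u\}$. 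This shared vertex $a$ is double-counted in the estimate behind (\ref{n2}) for $|N^2(v)|$, so the refined bound $|N^2(v)|\le 16$ combined with $|N^2(u)|\le 17$ violates Lemma \ref{l1}.
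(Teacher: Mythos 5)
Your overall framework matches the paper's: parts (i) and the main claim of (ii) are handled exactly as in the paper (the $3$-neighbors are forced off the triangles by Lemma \ref{l2}, and the adjacent-triangle configuration gives $|N^2(v)|\le 17$ versus $|N^2(u)|\le 15$ for a $3$-neighbor $u$, contradicting Lemma \ref{l1}), and your treatment of part (iii) is sound, including the key structural point for the last claim: badness of the corner $v_1$ forces a common neighbor of $v_1$ and $v_2$ other than $v$, which sharpens (\ref{n2}) by $1$ to give $|N^2(v)|\le 16$. (The paper then invokes Lemma \ref{l0} with $M=G-v+\{v_1v_5\}$ rather than Lemma \ref{l1}, but your route works equally well; also note that for the first two claims of (iii) the single chord $v_1v_5$ already suffices, since $v_1v_2v_3v_4v_5$ is a path in $G-v$.)

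The genuine gap is in (ii)(a) and (ii)(b), where you assert that a pair of non-crossing chords can always be chosen so that all of $v_1,\dots,v_5$ are pairwise within distance $2$ and $\Delta(M)\le 5$. This fails when the three triangles are consecutive, say $v_2v_3,v_3v_4,v_4v_5\in E(G)$, and the degree-$4$ neighbor is one of $v_2,v_3,v_4,v_5$ while $d_G(v_1)=5$. Then $d_{G-v}(v_1)=4$, so at most one chord may be added at $v_1$; a single chord $v_1v_3$ or $v_1v_4$ leaves $d_M(v_2,v_5)=3$ unrepaired, and every chord that would repair it ($v_2v_5$, $v_2v_4$, or $v_3v_5$) either crosses the chord at $v_1$ inside the merged face or raises $v_3$ or $v_4$ to degree $6$. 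So no $M\in\mathcal{G}(v)$ exists in these subcases, and Lemma \ref{l0} cannot be applied. The paper resolves exactly these subcases by switching to Lemma \ref{l1} on the edge $vv_j$ for the degree-$4$ neighbor $v_j$, using the refined estimates $|N^2(v_2)|\le 5d_G(v_2)-3$ and $|N^2(v_j)|\le 5d_G(v_j)-5$ for $j\in\{3,4\}$ (coming from the triangles and $4$-faces already incident with $v_j$), paired with $|N^2(v)|\le 16$. Your preamble does list Lemma \ref{l1} as an available tool, but your argument for (ii)(a),(b) as written relies solely on the chord construction, so this case split and the accompanying degree-sum estimates at the $4$-neighbors need to be supplied.
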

    \begin{figure}[h]
    \centering
    \subfloat[]{
    \definecolor{sqsqsq}{rgb}{0.12549019607843137,0.12549019607843137,0.12549019607843137}
\begin{tikzpicture}[line cap=round,line join=round,>=triangle 45,x=0.6cm,y=0.6cm]
\clip(-2.5,-2.) rectangle (2.5,3.);
\draw [line width=0.8pt] (0.,2.077683537175253)-- (0.,0.37638192047117347);
\draw [line width=0.8pt] (-1.6180339887498947,0.9021130325903073)-- (0.,0.37638192047117347);
\draw [line width=0.8pt] (0.,0.37638192047117347)-- (-1.,-1.);
\draw [line width=0.8pt] (0.,0.37638192047117347)-- (1.,-1.);
\draw [line width=0.8pt] (0.,0.37638192047117347)-- (1.618033988749895,0.9021130325903065);
\draw (1.5,1.3) node[anchor=north west] {$v_1$};
\draw (-0.5,2.9) node[anchor=north west] {$v_2$};
\draw (-2.6,1.2679725201426373) node[anchor=north west] {$v_3$};
\draw (-1.6,-0.9000098641126053) node[anchor=north west] {$v_4$};
\draw (0.8,-0.8545277161911666) node[anchor=north west] {$v_5$};
\draw [line width=0.8pt] (-1.,-1.)-- (1.,-1.);
\draw [line width=0.8pt] (-1.6180339887498947,0.9021130325903073)-- (-1.,-1.);
\draw (0.046739112826465715,0.5431669413394702) node[anchor=north west] {$v$};
\draw [line width=1.2pt] (0.6914269590910433,0.6876647115976507)-- (0.8726254405972135,0.5766075777712885);
\draw [line width=1.2pt] (0.8236584996586465,0.7273123257637493)-- (0.7423435941902274,0.5309594219845322);
\begin{scriptsize}
\draw [fill=sqsqsq] (-1.,-1.) circle (1.0pt);
\draw [fill=sqsqsq] (1.,-1.) circle (1.0pt);
\draw [fill=sqsqsq] (1.618033988749895,0.9021130325903065) circle (1.0pt);
\draw [fill=sqsqsq] (0.,2.077683537175253) circle (1.0pt);
\draw [fill=sqsqsq] (-1.6180339887498947,0.9021130325903073) circle (1.0pt);
\draw [fill=sqsqsq] (0.,0.37638192047117347) circle (1.0pt);
\end{scriptsize}
\end{tikzpicture}}
    \subfloat[]{
    \definecolor{sqsqsq}{rgb}{0.12549019607843137,0.12549019607843137,0.12549019607843137}
\begin{tikzpicture}[line cap=round,line join=round,>=triangle 45,x=0.6cm,y=0.6cm]
\clip(-2.5,-2.) rectangle (2.5,3.);
\draw [line width=0.8pt] (0.,2.077683537175253)-- (0.,0.37638192047117347);
\draw [line width=0.8pt] (-1.6180339887498947,0.9021130325903073)-- (0.,0.37638192047117347);
\draw [line width=0.8pt] (0.,0.37638192047117347)-- (-1.,-1.);
\draw [line width=0.8pt] (0.,0.37638192047117347)-- (1.,-1.);
\draw [line width=0.8pt] (0.,0.37638192047117347)-- (1.618033988749895,0.9021130325903065);
\draw (1.5,1.3) node[anchor=north west] {$v_1$};
\draw (-0.5,2.9) node[anchor=north west] {$v_2$};
\draw (-2.6,1.2679725201426373) node[anchor=north west] {$v_3$};
\draw (-1.6,-0.9000098641126053) node[anchor=north west] {$v_4$};
\draw (0.8,-0.8545277161911666) node[anchor=north west] {$v_5$};
\draw [line width=0.8pt] (-1.,-1.)-- (1.,-1.);
\draw (0.09689968582943193,0.6) node[anchor=north west] {$v$};
\draw [line width=1.2pt] (0.6914269590910433,0.6876647115976507)-- (0.8726254405972135,0.5766075777712885);
\draw [line width=1.2pt] (0.8236584996586465,0.7273123257637493)-- (0.7423435941902274,0.5309594219845322);
\draw [line width=0.8pt] (0.,2.077683537175253)-- (-1.6180339887498947,0.9021130325903073);
\begin{scriptsize}
\draw [fill=sqsqsq] (-1.,-1.) circle (1.0pt);
\draw [fill=sqsqsq] (1.,-1.) circle (1.0pt);
\draw [fill=sqsqsq] (1.618033988749895,0.9021130325903065) circle (1.0pt);
\draw [fill=sqsqsq] (0.,2.077683537175253) circle (1.0pt);
\draw [fill=sqsqsq] (-1.6180339887498947,0.9021130325903073) circle (1.0pt);
\draw [fill=sqsqsq] (0.,0.37638192047117347) circle (1.0pt);
\end{scriptsize}
\end{tikzpicture}}
    \subfloat[]{
    \definecolor{aqaqaq}{rgb}{0.6274509803921569,0.6274509803921569,0.6274509803921569}
\definecolor{sqsqsq}{rgb}{0.12549019607843137,0.12549019607843137,0.12549019607843137}
\begin{tikzpicture}[line cap=round,line join=round,>=triangle 45,x=0.6cm,y=0.6cm]
\clip(-2.5,-2.) rectangle (2.5,3.);
\draw [line width=0.8pt] (0.,2.077683537175253)-- (0.,0.37638192047117347);
\draw [line width=0.8pt] (-1.6180339887498947,0.9021130325903073)-- (0.,0.37638192047117347);
\draw [line width=0.8pt] (0.,0.37638192047117347)-- (-1.,-1.);
\draw [line width=0.8pt] (0.,0.37638192047117347)-- (1.,-1.);
\draw [line width=0.8pt] (0.,0.37638192047117347)-- (1.618033988749895,0.9021130325903065);
\draw (1.5,1.3) node[anchor=north west] {$v_1$};
\draw (-0.5,2.9) node[anchor=north west] {$v_2$};
\draw (-2.6,1.2679725201426373) node[anchor=north west] {$v_3$};
\draw (-1.6,-0.9000098641126053) node[anchor=north west] {$v_4$};
\draw (0.8,-0.8545277161911666) node[anchor=north west] {$v_5$};
\draw [line width=0.8pt] (-1.,-1.)-- (1.,-1.);
\draw [line width=0.8pt,dotted,color=aqaqaq] (-1.6180339887498947,0.9021130325903073)-- (-1.,-1.);
\draw [line width=0.8pt] (-1.6180339887498947,0.9021130325903073)-- (0.,2.077683537175253);
\draw [line width=0.8pt] (0.,2.077683537175253)-- (1.618033988749895,0.9021130325903065);
\draw [line width=0.8pt,dotted,color=aqaqaq] (1.618033988749895,0.9021130325903065)-- (1.,-1.);
\draw (0.07267889201058018,0.6331018768425571) node[anchor=north west] {$v$};
\draw [line width=0.8pt,dotted,color=aqaqaq] (0.,0.37638192047117347) circle (0.18cm);
\begin{scriptsize}
\draw [fill=sqsqsq] (-1.,-1.) circle (1.0pt);
\draw [fill=sqsqsq] (1.,-1.) circle (1.0pt);
\draw [fill=sqsqsq] (1.618033988749895,0.9021130325903065) circle (1.0pt);
\draw [fill=sqsqsq] (0.,2.077683537175253) circle (1.0pt);
\draw [fill=sqsqsq] (-1.6180339887498947,0.9021130325903073) circle (1.0pt);
\draw [fill=sqsqsq] (0.,0.37638192047117347) circle (1.0pt);
\end{scriptsize}
\end{tikzpicture}}
    \subfloat[]{
    \definecolor{aqaqaq}{rgb}{0.6274509803921569,0.6274509803921569,0.6274509803921569}
\definecolor{sqsqsq}{rgb}{0.12549019607843137,0.12549019607843137,0.12549019607843137}
\begin{tikzpicture}[line cap=round,line join=round,>=triangle 45,x=0.6cm,y=0.6cm]
\clip(-2.5,-2.) rectangle (2.5,3.);
\draw [line width=0.8pt] (0.,2.077683537175253)-- (0.,0.37638192047117347);
\draw [line width=0.8pt] (-1.6180339887498947,0.9021130325903073)-- (0.,0.37638192047117347);
\draw [line width=0.8pt] (0.,0.37638192047117347)-- (-1.,-1.);
\draw [line width=0.8pt] (0.,0.37638192047117347)-- (1.,-1.);
\draw [line width=0.8pt] (0.,0.37638192047117347)-- (1.618033988749895,0.9021130325903065);
\draw (1.5,1.3) node[anchor=north west] {$v_1$};
\draw (-0.5,2.9) node[anchor=north west] {$v_2$};
\draw (-2.6,1.2679725201426373) node[anchor=north west] {$v_3$};
\draw (-1.6,-0.9000098641126053) node[anchor=north west] {$v_4$};
\draw (0.8,-0.8545277161911666) node[anchor=north west] {$v_5$};
\draw [line width=0.8pt] (-1.,-1.)-- (1.,-1.);
\draw [line width=0.8pt] (-1.6180339887498947,0.9021130325903073)-- (-1.,-1.);
\draw [line width=0.8pt] (-1.6180339887498947,0.9021130325903073)-- (0.,2.077683537175253);
\draw [line width=0.8pt,dotted,color=aqaqaq] (0.,2.077683537175253)-- (1.618033988749895,0.9021130325903065);
\draw [line width=0.8pt,dotted,color=aqaqaq] (1.618033988749895,0.9021130325903065)-- (1.,-1.);
\draw (0.08889413098592107,0.6737365518696051) node[anchor=north west] {$v$};
\draw [line width=0.8pt,dotted,color=aqaqaq] (0.,0.37638192047117347) circle (0.18cm);
\begin{scriptsize}
\draw [fill=sqsqsq] (-1.,-1.) circle (1.0pt);
\draw [fill=sqsqsq] (1.,-1.) circle (1.0pt);
\draw [fill=sqsqsq] (1.618033988749895,0.9021130325903065) circle (1.0pt);
\draw [fill=sqsqsq] (0.,2.077683537175253) circle (1.0pt);
\draw [fill=sqsqsq] (-1.6180339887498947,0.9021130325903073) circle (1.0pt);
\draw [fill=sqsqsq] (0.,0.37638192047117347) circle (1.0pt);
\end{scriptsize}
\end{tikzpicture}}
    \caption{The graphs of Lemma \ref{l6}(i)(ii)}
    \label{fig:4}
\end{figure}
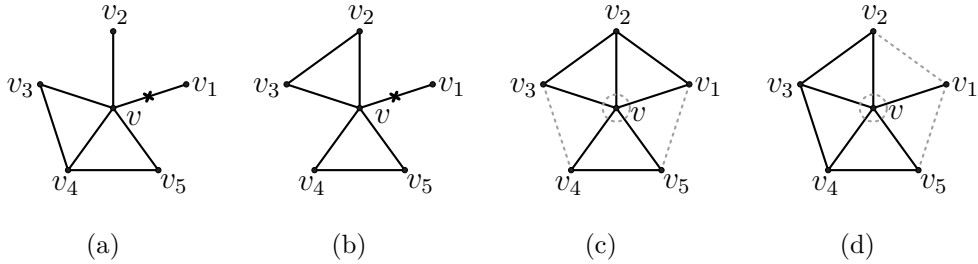
\begin{proof}
(i) Assume by contradiction that $f_3(v)=2$ but $n_3(v)=2$. First, let $[vv_3v_4]$ and $[vv_4v_5]$ be the two $3$-faces incident with $v$ (see Figure \ref{fig:4}(a)). By Lemma \ref{l2}, $d_G(v_i)\ge4$, for each $i$ with $3\le i\le5$. Thus $d_G(v_1)=d_G(v_2)=3$ and $|N^2(v)|\le \sum_{i=1}^{i=5}d_{G}(v_i)-2f_{3}(v)\le 17, |N^2(v_1)|\le \sum_{w\in N_G(v_1)}d_{G}(w)\le 15$, contrary to Lemma \ref{l1}. 
Then $n_3(v)\le1$. Otherwise, we have $v_2v_3,v_4v_5\in E(G)$ (see Figure \ref{fig:4}(b)). By Lemma \ref{l2}, we obtain $d_G(v_i)\ge4$ for each $i$ with $2\le i\le5$, which implies $n_3(v)\le1$.

(ii)  If $f_3(v)=3$, we have either $v_1v_2,v_2v_3,v_4v_5\in E(G)$ or $v_2v_3,v_3v_4,v_4v_5\in E(G)$ (see Figure \ref{fig:4}(c) or (d), respectively). By contradiction, suppose $n_3(v)\ge 1.$ By Lemma \ref{l2}, it should be the case as depicted in Figure \ref{fig:4}(d) and $d_G(v_i)\ge4$ for each $i$ with $2\le i\le5$ in this case. If $d_G(v_1)=3$, then $|N^2(v)|\le \sum_{i=1}^{i=5}d_{G}(v_i)-2f_{3}(v)\le 17$ and $|N^2(v_1)|\le \sum_{w\in N_{G}(v_1)}d_{G}(w)\le 15$, contrary to Lemma \ref{l1}. Thus $n_3(v)=0$. 

(a) Now we are to prove that if $f_3(v)=3, f_4(v)=2$, then $n_4(v)=0$. Let $v_1v_2,v_2v_3,v_4v_5\in E(G)$ (see see Figure \ref{fig:4}(c)). If there is some $i\in\{1,2,3,4,5\}$ with $d_G(v_i)=4$, then let $M=G-v+\{v_1v_5,v_3v_4\}\in \mathcal{G}(v)$. By (\ref{n2}), we have $ |N^2(v)|\le \sum_{i=1}^{i=5}d_{G}(v_i)-2f_{3}(v)-f_{4}(v)\le 16$, a contradiction to Lemma \ref{l0}. Thus $n_4(v)=0$.

Otherwise, we have $v_2v_3,v_3v_4,v_4v_5\in E(G)$ (see Figure \ref{fig:4}(d)) and it is sufficient to verify $d_G(v_i)\neq4$ for each $i$ with $ 1\le i\le3$ by symmetry. First assume that $d_G(v_1)=4.$ Let $M=G-v+\{v_1v_2,v_1v_5\}$. Then $M\in \mathcal{G}(v).$ By (\ref{n2}), we have $ |N^2(v)|= \sum_{i=1}^{i=5}d_{G}(v_i)-2f_{3}(v)-f_{4}(v)\le 16$, a contradiction to Lemma \ref{l0}. Thus $d_{G}(v_1)=5.$ Assume next that $d_G(v_2)=4$. By (1), we have
$|N^2(v)|\le 16$ and $|N^2(v_2)|\le 17$, a contradiction to Lemma \ref{l1}. Thus $d_{G}(v_1)=d_{G}(v_2)=5.$ Assume that $d_{G}(v_3)=4$. By (\ref{n2}), we have $|N^2(v)|\le 16$ and $|N^2(v_3)|\le 15$, a contradiction to Lemma \ref{l1}. Thus $n_4(v)=0$.

(b) Now we are to prove that if $f_3(v)=3$ and $f_4(v)=1$, then $n_4(v)\le1$. Assume by contradiction that $n_4(v)=2$. First assume that  $v_1v_2,v_2v_3,v_4v_5\in E(G)$ (see Figure \ref{fig:4}(c)). Then we can verify $M=G-v+\{v_1v_5,v_3v_4\}\in\mathcal{G}(v)$. By (\ref{n2}), we have $ |N^2(v)|\le\sum_{i=1}^{i=5}d_{G}(v_i)-2f_3(v)-f_4(v)\le 16$, contrary to Lemma \ref{l0}. Thus $n_4(v)\le1$.

Therefore, we may assume that $v_2v_3,v_3v_4,v_4v_5\in E(G)$ (see Figure \ref{fig:4}(d)), and there are $i,j\in\{1,2,3,4,5\}$ such that $d_G(v_i)=d_G(v_j)=4$ in this case.  If $d_G(v_1)=4$, then let $M=G-v+\{v_1v_2,v_1v_5\}\in \mathcal{G}(v)$. By (\ref{n2}), we have $ |N^2(v)|\le \sum_{i=1}^{i=5}d_{G}(v_i)-2f_{3}(v)-f_{4}(v)\le 16$,
a contradiction to Lemma \ref{l0}. Hence $d_G(v_1)=5$. Since $|N^2(v)|\le16$, $|N^2(v_2)|\le 5d_{G}(v_2)-3$ and $|N^2(v_j)|\le 5d_{G}(v_j)-5$ for $j\in \{3,4\}$ (see Figure \ref{fig:5}(d)), by Lemma \ref{l1}, we have $d_G(v_i)\neq4$ for all $i$ with $2\le i\le 4$, a contradiction to $n_4(v)=2$. Thus $n_4(v)\le1$.

\begin{figure}[h]
    \centering
    \subfloat[]{
    \definecolor{aqaqaq}{rgb}{0.6274509803921569,0.6274509803921569,0.6274509803921569}
\definecolor{sqsqsq}{rgb}{0.12549019607843137,0.12549019607843137,0.12549019607843137}
\begin{tikzpicture}[line cap=round,line join=round,>=triangle 45,x=0.6cm,y=0.6cm]
\clip(-2.5,-2.) rectangle (2.5,3.);
\draw [line width=0.8pt] (0.,2.077683537175253)-- (0.,0.37638192047117347);
\draw [line width=0.8pt] (-1.6180339887498947,0.9021130325903073)-- (0.,0.37638192047117347);
\draw [line width=0.8pt] (0.,0.37638192047117347)-- (-1.,-1.);
\draw [line width=0.8pt] (0.,0.37638192047117347)-- (1.,-1.);
\draw [line width=0.8pt] (0.,0.37638192047117347)-- (1.618033988749895,0.9021130325903065);
\draw (1.5,1.3) node[anchor=north west] {$v_1$};
\draw (-0.5,2.9) node[anchor=north west] {$v_2$};
\draw (-2.6,1.2679725201426373) node[anchor=north west] {$v_3$};
\draw (-1.6,-0.9000098641126053) node[anchor=north west] {$v_4$};
\draw (0.8,-0.8545277161911666) node[anchor=north west] {$v_5$};
\draw [line width=0.8pt] (-1.,-1.)-- (1.,-1.);
\draw [line width=0.8pt] (-1.6180339887498947,0.9021130325903073)-- (-1.,-1.);
\draw [line width=0.8pt] (-1.6180339887498947,0.9021130325903073)-- (0.,2.077683537175253);
\draw [line width=0.8pt] (0.,2.077683537175253)-- (1.618033988749895,0.9021130325903065);
\draw [line width=0.8pt,dotted,color=aqaqaq] (1.618033988749895,0.9021130325903065)-- (1.,-1.);
\draw (0.08646136107768357,0.6767045971639347) node[anchor=north west] {$v$};
\draw [line width=0.8pt,dotted,color=aqaqaq] (0.,0.37638192047117347) circle (0.18cm);
\begin{scriptsize}
\draw [fill=sqsqsq] (-1.,-1.) circle (1.0pt);
\draw [fill=sqsqsq] (1.,-1.) circle (1.0pt);
\draw [fill=sqsqsq] (1.618033988749895,0.9021130325903065) circle (1.0pt);
\draw [fill=sqsqsq] (0.,2.077683537175253) circle (1.0pt);
\draw [fill=sqsqsq] (-1.6180339887498947,0.9021130325903073) circle (1.0pt);
\draw [fill=sqsqsq] (0.,0.37638192047117347) circle (1.0pt);
\end{scriptsize}
\end{tikzpicture}}
    \subfloat[]{
    \definecolor{aqaqaq}{rgb}{0.6274509803921569,0.6274509803921569,0.6274509803921569}
\definecolor{sqsqsq}{rgb}{0.12549019607843137,0.12549019607843137,0.12549019607843137}
\begin{tikzpicture}[line cap=round,line join=round,>=triangle 45,x=0.6cm,y=0.6cm]
\clip(-2.5,-2.) rectangle (4.5,4.);
\draw [line width=0.8pt] (0.,2.077683537175253)-- (0.,0.37638192047117347);
\draw [line width=0.8pt] (-1.6180339887498947,0.9021130325903073)-- (0.,0.37638192047117347);
\draw [line width=0.8pt] (0.,0.37638192047117347)-- (-1.,-1.);
\draw [line width=0.8pt] (0.,0.37638192047117347)-- (1.,-1.);
\draw [line width=0.8pt] (0.,0.37638192047117347)-- (1.618033988749895,0.9021130325903065);
\draw (1.7,1.4) node[anchor=north west] {$v_1$};
\draw (-0.5,2.9) node[anchor=north west] {$v_2$};
\draw (-2.6,1.2679725201426373) node[anchor=north west] {$v_3$};
\draw (-1.6,-0.9000098641126053) node[anchor=north west] {$v_4$};
\draw (0.8,-0.8545277161911666) node[anchor=north west] {$v_5$};
\draw (1.1,0.31986394349931585) node[anchor=north west] {$5^+$-face};
\draw [line width=0.8pt] (-1.,-1.)-- (1.,-1.);
\draw [line width=0.8pt] (-1.6180339887498947,0.9021130325903073)-- (-1.,-1.);
\draw [line width=0.8pt] (-1.6180339887498947,0.9021130325903073)-- (0.,2.077683537175253);
\draw [line width=0.8pt] (0.,2.077683537175253)-- (1.618033988749895,0.9021130325903065);
\draw [line width=0.8pt,dotted] (1.618033988749895,0.9021130325903065)-- (1.,-1.);
\draw (0.07810056546902483,0.6313009392753214) node[anchor=north west] {$v$};
\draw [line width=0.8pt,dotted,color=aqaqaq] (0.,0.37638192047117347) circle (0.18cm);
\draw [line width=0.8pt] (1.6180339887498953,2.9021130325903064)-- (1.618033988749895,0.9021130325903065);
\draw [line width=0.8pt] (0.,2.077683537175253)-- (1.6180339887498953,2.9021130325903064);
\draw [line width=0.8pt] (1.6180339887498953,2.9021130325903064)-- (3.23606797749979,2.0776835371752522);
\draw [line width=0.8pt] (3.23606797749979,2.0776835371752522)-- (1.618033988749895,0.9021130325903065);
\draw [line width=0.8pt] (1.618033988749895,0.9021130325903065)-- (3.5201470213402017,0.2840790438404114);
\draw [line width=0.8pt] (3.23606797749979,2.0776835371752522)-- (3.5201470213402017,0.2840790438404114);
\begin{scriptsize}
\draw [fill=sqsqsq] (-1.,-1.) circle (1.0pt);
\draw [fill=sqsqsq] (1.,-1.) circle (1.0pt);
\draw [fill=sqsqsq] (1.618033988749895,0.9021130325903065) circle (1.0pt);
\draw [fill=sqsqsq] (0.,2.077683537175253) circle (1.0pt);
\draw [fill=sqsqsq] (-1.6180339887498947,0.9021130325903073) circle (1.0pt);
\draw [fill=sqsqsq] (0.,0.37638192047117347) circle (1.0pt);
\draw [fill=sqsqsq] (1.6180339887498953,2.9021130325903064) circle (1.0pt);
\draw [fill=sqsqsq] (3.23606797749979,2.0776835371752522) circle (1.0pt);
\draw [fill=sqsqsq] (3.5201470213402017,0.2840790438404114) circle (1.0pt);
\end{scriptsize}
\end{tikzpicture}}
    \subfloat[]{
    \definecolor{sqsqsq}{rgb}{0.12549019607843137,0.12549019607843137,0.12549019607843137}
\begin{tikzpicture}[line cap=round,line join=round,>=triangle 45,x=0.6cm,y=0.6cm]
\clip(-2.7,-2.) rectangle (3,3.);
\draw [line width=0.8pt] (0.,2.077683537175253)-- (0.,0.37638192047117347);
\draw [line width=0.8pt] (-1.6180339887498947,0.9021130325903073)-- (0.,0.37638192047117347);
\draw [line width=0.8pt] (0.,0.37638192047117347)-- (-1.,-1.);
\draw [line width=0.8pt] (0.,0.37638192047117347)-- (1.,-1.);
\draw [line width=0.8pt] (0.,0.37638192047117347)-- (1.618033988749895,0.9021130325903065);
\draw (1.5,1.3) node[anchor=north west] {$v_1$};
\draw (-0.5,2.9) node[anchor=north west] {$v_2$};
\draw (-2.6,1.2679725201426373) node[anchor=north west] {$v_3$};
\draw (-1.6,-0.9000098641126053) node[anchor=north west] {$v_4$};
\draw (0.8,-0.8545277161911666) node[anchor=north west] {$v_5$};
\draw [line width=0.8pt] (-1.,-1.)-- (1.,-1.);
\draw [line width=0.8pt] (-1.6180339887498947,0.9021130325903073)-- (0.,2.077683537175253);
\draw [line width=0.8pt] (0.,2.077683537175253)-- (1.618033988749895,0.9021130325903065);
\draw (0.07267889201058018,0.6331018768425571) node[anchor=north west] {$v$};
\draw (-2.9,0.31986394349931585) node[anchor=north west] {$4$-face};
\draw (0.4,0.31986394349931585) node[anchor=north west] {$5^+$-face};
\begin{scriptsize}
\draw [fill=sqsqsq] (-1.,-1.) circle (1.0pt);
\draw [fill=sqsqsq] (1.,-1.) circle (1.0pt);
\draw [fill=sqsqsq] (1.618033988749895,0.9021130325903065) circle (1.0pt);
\draw [fill=sqsqsq] (0.,2.077683537175253) circle (1.0pt);
\draw [fill=sqsqsq] (-1.6180339887498947,0.9021130325903073) circle (1.0pt);
\draw [fill=sqsqsq] (0.,0.37638192047117347) circle (1.0pt);
\end{scriptsize}
\end{tikzpicture}}
    \subfloat[]{
    \definecolor{aqaqaq}{rgb}{0.6274509803921569,0.6274509803921569,0.6274509803921569}
\definecolor{sqsqsq}{rgb}{0.12549019607843137,0.12549019607843137,0.12549019607843137}
\begin{tikzpicture}[line cap=round,line join=round,>=triangle 45,x=0.6cm,y=0.6cm]
\clip(-2.5,-2.) rectangle (3,3.);
\draw [line width=0.8pt] (0.,2.077683537175253)-- (0.,0.37638192047117347);
\draw [line width=0.8pt] (-1.6180339887498947,0.9021130325903073)-- (0.,0.37638192047117347);
\draw [line width=0.8pt] (0.,0.37638192047117347)-- (-1.,-1.);
\draw [line width=0.8pt] (0.,0.37638192047117347)-- (1.,-1.);
\draw [line width=0.8pt] (0.,0.37638192047117347)-- (1.618033988749895,0.9021130325903065);
\draw (1.5,1.3) node[anchor=north west] {$v_1$};
\draw (-0.5,2.9) node[anchor=north west] {$v_2$};
\draw (-2.6,1.2679725201426373) node[anchor=north west] {$v_3$};
\draw (-1.6,-0.9000098641126053) node[anchor=north west] {$v_4$};
\draw (0.8,-0.8545277161911666) node[anchor=north west] {$v_5$};
\draw (0.4,2.2) node[anchor=north west] {$4$-face};
\draw (0.4,0.31986394349931585) node[anchor=north west] {$5^+$-face};
\draw [line width=0.8pt] (-1.,-1.)-- (1.,-1.);
\draw [line width=0.8pt] (-1.6180339887498947,0.9021130325903073)-- (-1.,-1.);
\draw [line width=0.8pt] (-1.6180339887498947,0.9021130325903073)-- (0.,2.077683537175253);
\draw (0.08889413098592107,0.6737365518696051) node[anchor=north west] {$v$};
\begin{scriptsize}
\draw [fill=sqsqsq] (-1.,-1.) circle (1.0pt);
\draw [fill=sqsqsq] (1.,-1.) circle (1.0pt);
\draw [fill=sqsqsq] (1.618033988749895,0.9021130325903065) circle (1.0pt);
\draw [fill=sqsqsq] (0.,2.077683537175253) circle (1.0pt);
\draw [fill=sqsqsq] (-1.6180339887498947,0.9021130325903073) circle (1.0pt);
\draw [fill=sqsqsq] (0.,0.37638192047117347) circle (1.0pt);
\end{scriptsize}
\end{tikzpicture}}
    \caption{The graphs of Lemma \ref{l6}(iii)}
    \label{fig:5}
\end{figure}
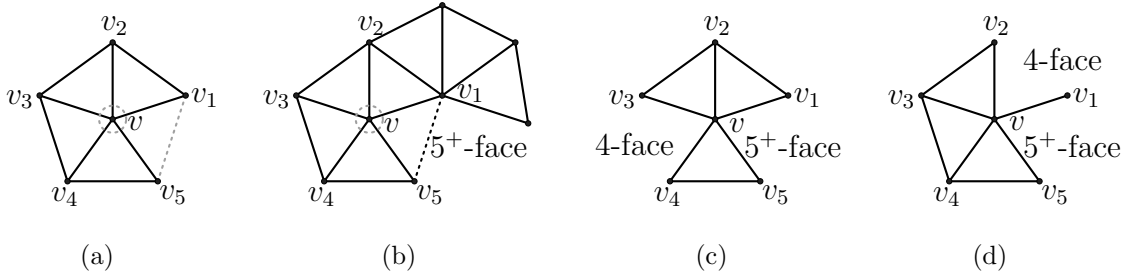

(iii)  First we prove that $f_4(v)=0$ if $f_3(v)=4$. Assume that $v$ is contained in four triangles and one $4$-face. Let $M=G-v+\{v_1v_5\}$ (see Figure \ref{fig:4}(a)). By simple computation, we have $M\in \mathcal{G}(v)$. By (\ref{n2}), since $ |N^2(v)|\le \sum_{i=1}^{i=5}d_{G}(v_i)-2f_{3}(v)-f_{4}(v)\le 16$, there is a contradiction to Lemma \ref{l0}. Thus $f_4(v)=0$. Now we prove $d_G(v_i)=5$ for each $i$ with $1\le i\le 5$. Suppose there is some $i\in\{1,2,3,4,5\}$ such that $d_G(v_i)\le4$. Take $M=G-v+\{v_1v_5\}\in \mathcal{G}(v)$ (see Figure \ref{fig:5}(a)). By (\ref{n2}), $ |N^2(v)|\le \sum_{i=1}^{i=5}d_{G}(v_i)-2f_{3}(v)\le 16$, contrary to Lemma \ref{l0}. Thus $d_G(v_i)=5$ for all $i$ with $1\le i\le 5$.

Furthermore, $v_1$ and $v_5$ are corners of $v$ by definition. Assume by contradiction that $v_1$ is a bad $5$-vertex (see Figure \ref{fig:5}(b)). Let $M=G-v+\{v_1v_5\}$. Then $M\in \mathcal{G}(v)$. Since $ |N^2(v)|\le \sum_{i=1}^{i=5}d_{G}(v_i)-2f_{3}(v)-1\le 16$, we obtain a contradiction to Lemma \ref{l0}.\end{proof}  

\subsection{Discharging}
Applying the properties stated above in Section 3.1, we can successfully prove our argument by applying a discharging method on the graph $G$. First assign initial charges to each vertex $x\in V(G)$ and face $f\in F(G)$ by setting $\mu(x)=d_{G}(x)-4$ and $\mu(f)=d_{G}(f)-4$ respectively. By Euler formula, it follows that
\begin{equation}\label{euler}
\sum_{x\in V(G)\cup F(G)}\mu(x)=\sum_{v\in V(G)}(d_{G}(v)-4)+\sum_{f\in F(G)}(d_{G}(f)-4)=-8.
\end{equation}

Then we define discharging rules and redistribute the charges while maintaining their total sum, so that all new charges are non-negative. Finally, we use $\mu'(x)$ to represent the final charge of each $x\in V(G)\cup F(G)$ after the completion of the discharge procedure. The discharging rules we will apply is defined as below.

{\bf{Discharging Rules:}} 

{\em\indent ($R_1$) Every $3$-vertex $v$ receives charge $\frac{1}{5}$ from each $5$-neighbor.}

{\em\indent ($R_2$) Every $4$-vertex $v$ with $f_3(v)=1$ receives charge $\frac{1}{15}$ from each $5$-neighbor.}

{\em\indent ($R_3$) Every bad $5$-vertex $v$ receives charge $\frac{1}{15}$ from each corner of $v$.}

{\em\indent ($R_4$) Every $3$-face receives charge $\frac{1}{3}$ from each incident $4^+$-vertex.}

{\em\indent ($R_5$) Every $5^+$-face gives charge $\frac{1}{5}$ to each incident vertex.}
\begin{figure}[h]
    \centering
    \subfloat[]{
    \begin{tikzpicture}[line cap=round,line join=round,>=triangle 45,x=0.7cm,y=0.7cm]
\clip(-1.3,-1) rectangle (1.5,3);
\draw [line width=0.8pt] (-1.,0.)-- (1.,0.);
\draw [shift={(-0.0036522668476274118,-0.8761100795950977)},line width=0.8pt]  plot[domain=0.7176518300992938:2.2330465153414476,variable=\t]({1.*1.3322487546687778*cos(\t r)+0.*1.3322487546687778*sin(\t r)},{0.*1.3322487546687778*cos(\t r)+1.*1.3322487546687778*sin(\t r)});
\draw [line width=2.pt] (-0.7067052519898682,0.40679016339009066)-- (-0.8228426067390476,0.1745154156909671);
\draw [line width=2.pt] (-0.8228426067390476,0.1745154156909671)-- (-0.5583075209214722,0.1745154156909671);
\draw (0.9,0.1) node[anchor=north west] {$5$};
\draw (-0.4,1.5) node[anchor=north west] {$\frac{1}{5}$};
\draw (-1.3,0.1) node[anchor=north west] {$3$};
\begin{scriptsize}
\draw [fill=black] (-1.,0.) circle (1.0pt);
\draw [fill=black] (1.,0.) circle (1.0pt);
\end{scriptsize}
\end{tikzpicture}}
    \subfloat[]{
    \begin{tikzpicture}[line cap=round,line join=round,>=triangle 45,x=0.5cm,y=0.5cm]
\clip(-2.5,-3.) rectangle (3.,3.);
\draw [line width=0.8pt] (0.,2.)-- (0.,-2.);
\draw [line width=0.8pt] (-2.,0.)-- (2.,0.);
\draw (0.05954862249607203,1) node[anchor=north west] {$4$};
\draw (-0.4,3) node[anchor=north west] {$5$};
\draw (-0.4,-1.9) node[anchor=north west] {$5$};
\draw [shift={(0.2389324885448464,1.1292506137611262)},line width=0.8pt]  plot[domain=1.8386034161653602:3.9520267206560247,variable=\t]({1.*0.902935893470632*cos(\t r)+0.*0.902935893470632*sin(\t r)},{0.*0.902935893470632*cos(\t r)+1.*0.902935893470632*sin(\t r)});
\draw [line width=1.2pt] (-0.8904929632235269,0.5383876125887598)-- (-0.38335646311593863,0.4749955500753113);
\draw [line width=1.2pt] (-0.3072859880998004,0.906061575166761)-- (-0.38335646311593863,0.4749955500753113);
\draw [line width=0.8pt] (0.,2.)-- (2.,0.);
\draw [shift={(-0.2389324885448464,-1.1292506137611262)},line width=0.8pt]  plot[domain=-1.3029892374244332:0.8104340670662319,variable=\t]({1.*0.902935893470632*cos(\t r)+0.*0.902935893470632*sin(\t r)},{0.*0.902935893470632*cos(\t r)+1.*0.902935893470632*sin(\t r)});
\draw [line width=1.2pt] (0.8904929632235269,-0.5383876125887598)-- (0.38335646311593863,-0.4749955500753113);
\draw [line width=1.2pt] (0.3072859880998004,-0.906061575166761)-- (0.38335646311593863,-0.4749955500753113);
\draw (-1.8,2.3) node[anchor=north west] {$\frac{1}{15}$};
\draw (0.5,-0.6) node[anchor=north west] {$\frac{1}{15}$};
\begin{scriptsize}
\draw [fill=black] (0.,0.) circle (1.0pt);
\draw [fill=black] (-2.,0.) circle (1.0pt);
\draw [fill=black] (2.,0.) circle (1.0pt);
\draw [fill=black] (0.,2.) circle (1.0pt);
\draw [fill=black] (0.,-2.) circle (1.0pt);
\end{scriptsize}
\end{tikzpicture}}
    \subfloat[]{
    \definecolor{sqsqsq}{rgb}{0.12549019607843137,0.12549019607843137,0.12549019607843137}
\begin{tikzpicture}[line cap=round,line join=round,>=triangle 45,x=0.7cm,y=0.7cm]
\clip(-2.5,-2.) rectangle (3,3.);
\draw [line width=0.8pt] (0.,2.077683537175253)-- (0.,0.37638192047117347);
\draw [line width=0.8pt] (-1.6180339887498947,0.9021130325903073)-- (0.,0.37638192047117347);
\draw [line width=0.8pt] (0.,0.37638192047117347)-- (-1.,-1.);
\draw [line width=0.8pt] (0.,0.37638192047117347)-- (1.,-1.);
\draw [line width=0.8pt] (0.,0.37638192047117347)-- (1.618033988749895,0.9021130325903065);
\draw [line width=0.8pt] (-1.,-1.)-- (1.,-1.);
\draw [line width=0.8pt] (-1.6180339887498947,0.9021130325903073)-- (-1.,-1.);
\draw [line width=0.8pt] (-1.6180339887498947,0.9021130325903073)-- (0.,2.077683537175253);
\draw [line width=0.8pt] (0.,2.077683537175253)-- (1.618033988749895,0.9021130325903065);
\draw (1.5,1.3) node[anchor=north west] {$5$};
\draw (-0.5,2.9) node[anchor=north west] {$5$};
\draw (-2.4,1.2679725201426373) node[anchor=north west] {$5$};
\draw (-1.6,-0.9000098641126053) node[anchor=north west] {$5$};
\draw (0.9,-0.8545277161911666) node[anchor=north west] {$5$};
\draw (-0.1,1.3) node[anchor=north west] {$5$};
\draw (0.7,0.0) node[anchor=north west] {$5^+$-face};
\draw [shift={(0.796268360033486,0.9963044794237153)},line width=0.8pt]  plot[domain=4.212093129369753:6.167375596146971,variable=\t]({1.*0.834434700280234*cos(\t r)+0.*0.834434700280234*sin(\t r)},{0.*0.834434700280234*cos(\t r)+1.*0.834434700280234*sin(\t r)});
\draw [line width=1.2pt] (0.39600242457082657,0.2641375244605961)-- (0.7138760144328892,0.359499601419215);
\draw [line width=1.2pt] (0.39600242457082657,0.2641375244605961)-- (0.5973223648167996,-0.03254449274399617);
\draw (1.2,0.8) node[anchor=north west] {$\frac{1}{15}$};
\begin{scriptsize}
\draw [fill=sqsqsq] (-1.,-1.) circle (1.0pt);
\draw [fill=sqsqsq] (1.,-1.) circle (1.0pt);
\draw [fill=sqsqsq] (1.618033988749895,0.9021130325903065) circle (1.0pt);
\draw [fill=sqsqsq] (0.,2.077683537175253) circle (1.0pt);
\draw [fill=sqsqsq] (-1.6180339887498947,0.9021130325903073) circle (1.0pt);
\draw [fill=sqsqsq] (0.,0.37638192047117347) circle (1.0pt);
\end{scriptsize}
\end{tikzpicture}}
    \subfloat[]{
    \begin{tikzpicture}[line cap=round,line join=round,>=triangle 45,x=0.7cm,y=0.7cm]
\clip(-2.,-1.) rectangle (2.,2.5);
\draw [line width=0.8pt] (0.,1.7320508075688776)-- (-1.,0.);
\draw [line width=0.8pt] (1.,0.)-- (-1.,0.);
\draw [line width=0.8pt] (1.,0.)-- (0.,1.7320508075688776);
\draw [->,line width=0.8pt] (0.,1.7320508075688776) -- (0.,0.86);
\draw (-0.4,1.0) node[anchor=north west] {$\frac{1}{3}$};
\draw (-0.6,2.52) node[anchor=north west] {$4,5$};
\begin{scriptsize}
\draw [fill=black] (-1.,0.) circle (1.0pt);
\draw [fill=black] (1.,0.) circle (1.0pt);
\draw [fill=black] (0.,1.7320508075688776) circle (1.0pt);
\end{scriptsize}
\end{tikzpicture}}
    \subfloat[]{
    \begin{tikzpicture}[line cap=round,line join=round,>=triangle 45,x=0.6cm,y=0.6cm]
\clip(-2.,-2.) rectangle (3.,2.5);
\draw [line width=0.8pt] (0.,1.)-- (2.,1.);
\draw [line width=0.8pt] (0.,1.)-- (-1.,-0.7320508075688774);
\draw (-0.2,-0.18911394639296739) node[anchor=north west] {$5^+$-face};
\draw (-1.2,2) node[anchor=north west] {$3,4,5$};
\draw [->,line width=0.8pt] (0.4400706369874669,-0.1342633362877752) -- (0.,1.);
\draw (0.2676027701585017,1.0945703378990466) node[anchor=north west] {$\frac{1}{5}$};
\begin{scriptsize}
\draw [fill=black] (0.,1.) circle (1.0pt);
\draw [fill=black] (2.,1.) circle (1.0pt);
\draw [fill=black] (-1.,-0.7320508075688774) circle (1.0pt);
\end{scriptsize}
\end{tikzpicture}}
    
    \caption{$R_1$-$R_5$}
    \label{fig:my_label}
\end{figure}

Now we will derive the final contradiction by proving each face and vertex has a non-negative new charge after discharging.
\begin{claim}\label{cl7}
    Each face in $G$ has a non-negative charge after discharging.
\end{claim}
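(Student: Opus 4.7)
The plan is a direct case analysis on the face degree, since among the discharging rules only $R_4$ and $R_5$ transfer charge between vertices and faces. I would split into the three cases $d_G(f)=3$, $d_G(f)=4$, and $d_G(f)\ge 5$ and verify non-negativity of $\mu'(f)$ in each.

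For a $3$-face $f$, the initial charge is $\mu(f)=-1$. By Lemma \ref{l2}, every vertex incident with a $3$-face has degree at least $4$, so rule $R_4$ applies at all three boundary vertices and contributes $3\cdot\frac{1}{3}=1$, giving $\mu'(f)=0$. For a $4$-face $f$, the initial charge is $0$ and no discharging rule touches $f$, so $\mu'(f)=0$. For a $k$-face $f$ with $k\ge 5$, only $R_5$ contributes: $f$ sends $\frac{1}{5}$ to each of the $k$ incidences along its boundary, so
\[
\mu'(f)=(k-4)-\frac{k}{5}=\frac{4(k-5)}{5}\ge 0,
\]
with equality iff $k=5$.

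I do not anticipate any real obstacle; this claim is essentially bookkeeping once the relevant structural lemma is in hand. The only mild subtlety worth recording is that a $3$-face of $G$ is actually bounded by a triangle (not a degenerate closed walk), which holds because $G$ is a simple plane graph with $\delta(G)\ge 3$ established earlier; this is exactly what allows Lemma \ref{l2} to be invoked on the three boundary vertices in the first case.
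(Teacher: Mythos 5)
Your proposal is correct and follows essentially the same route as the paper: the same three-case split on $d_G(f)$, the same invocation of Lemma \ref{l2} to guarantee all three vertices of a $3$-face are $4^+$-vertices contributing via $R_4$, and the same computation $(k-4)-\tfrac{k}{5}\ge 0$ for $k\ge 5$. No gaps.
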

\begin{pfc}
We check that any face of $G$ has a non-negative charge case by case as below. 

Case 1. $d_G(f)=3$.

The initial charge of the $3$-face $f\in F(G)$ is $\mu(f)=d_G(f)-4=-1$. By Lemma \ref{l2}, $3$-vertex is not contained in $3$-face $f$. By $(R_4)$, $3$-face $f$ obtains charges $\frac{1}{3}\times3$ from each $4^+$-vertex contained in $3$-face. Thus $\mu'(f)=-1+\frac{1}{3}\times3=0$.

Case 2. $d_G(f)=4$.

 Note that $4$-face $f$ is not involved in any discharging process in $(R_1)-(R_5)$. Then  $\mu'(f)=\mu(f)=d_G(f)-4=0$.
 
Case 3. $d_G(f)\ge 5$.

The original charge of $5^{+}$-face $f$ is $\mu(f)=d_G(f)-4\ge 1$. By $(R_5)$, $5^{+}$-face $f$ gives charges $\frac{1}{5}$ to every vertex contained in $f$. Then $\mu'(f)\ge d_G(f)-4-\frac{1}{5}\times d_G(f)\ge 0$. 

Thus we gain $\mu'(f)\ge 0$ for every $ f\in F(G)$, which completes the argument.
\end{pfc}
\begin{claim}\label{cl7}
    Each vertex in $G$ has a non-negative charge after discharging.
\end{claim}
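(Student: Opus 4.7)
The plan is to verify $\mu'(v)\ge 0$ for every vertex $v\in V(G)$ by splitting on $d_G(v)\in\{3,4,5\}$, invoking the structural lemmas of Section~3.1 to bound $f_3(v)$, $f_4(v)$, $n_3(v)$, $n_4(v)$, and $t_5(v)$. For $d_G(v)=3$, Lemma~\ref{l2} excludes $v$ from any $3$-face and Lemma~\ref{l3} forces all three neighbors to be $5$-vertices with $f_4(v)\le 1$; hence $R_1$ contributes $\frac{3}{5}$ and $R_5$ contributes at least $\frac{2}{5}$, so $\mu'(v)\ge -1+\frac{3}{5}+\frac{2}{5}=0$. For $d_G(v)=4$, Lemma~\ref{l4}(i) bounds $f_3(v)\le 1$; the subcase $f_3(v)=0$ is immediate, and when $f_3(v)=1$ the $\frac{1}{3}$ outflow via $R_4$ is absorbed by $R_5$ (when $f_4(v)\le 1$, since then at least two $5^+$-faces return $\frac{2}{5}$) or by a combination of $R_2$ and $R_5$ together with the pinned neighbor-degree structure of Lemma~\ref{l5}(ii) (when $f_4(v)=2$, giving $\frac{4}{15}+\frac{1}{5}$).

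The main case is $d_G(v)=5$ with initial charge $+1$; I split on $f_3(v)\in\{0,1,2,3,4\}$ via Lemma~\ref{l4}(ii). When $f_3(v)=4$, Lemma~\ref{l6}(iii) rigidly fixes the local structure: $f_{5^+}(v)=1$, every neighbor is a $5$-vertex, and the two corners of $v$ are non-bad, so the $\frac{4}{3}$ outflow via $R_4$ is matched exactly by $\frac{2}{15}$ via $R_3$ from the two corners and $\frac{1}{5}$ via $R_5$, giving $\mu'(v)=0$. When $f_3(v)=3$, Lemma~\ref{l6}(ii) forces $n_3(v)=0$ and constrains $n_4(v)$ through $f_4(v)$; a subcase analysis on $f_4(v)\in\{0,1,2\}$ and on whether the three triangles are consecutive or split around $v$ allows one to bound $t_5(v)$ by counting edges at $v$ lying in both a $3$-face and a $5^+$-face, and to verify $1-1-\frac{n_4(v)}{15}-\frac{t_5(v)}{15}+\frac{f_{5^+}(v)}{5}\ge 0$ in each configuration.

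When $f_3(v)=2$, Lemma~\ref{l6}(i) supplies $n_3(v)\le 1$, and Lemma~\ref{l3} forbids a $3$-neighbor from having both its adjacent faces at $v$ be $4$-faces, forcing at least one $5^+$-face at $v$ whenever $n_3(v)=1$. The $\frac{2}{3}$ outflow via $R_4$ together with $R_1,R_2,R_3$ contributions is then absorbed by the $R_5$ inflow. When $f_3(v)\in\{0,1\}$, the $R_4$ outflow is at most $\frac{1}{3}$, and the direct estimate $\frac{n_3(v)}{5}+\frac{n_4(v)}{15}\le 1$ using $n_3(v)+n_4(v)\le 5$ completes the check, again with Lemma~\ref{l3} providing extra $R_5$ inflow around $3$-neighbors when needed and the fact that $v$ is not a corner of any bad $5$-vertex when $f_3(v)=0$ eliminating $R_3$ outflow entirely.

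The main obstacle will be the subcases $f_3(v)\in\{2,3\}$ where $v$ simultaneously serves as a corner of several bad $5$-neighbors, sends $R_2$-charge to $4$-neighbors, and has little $R_5$ inflow. Here the proof must sharply bound $t_5(v)$ via the observation that each corner contribution consumes a distinct edge of $v$ lying in both a $3$-face and a $5^+$-face (so that $t_5(v)$ is controlled by the number of transitions between $3$-faces and $5^+$-faces in the cyclic arrangement of faces around $v$), and combine this with Lemma~\ref{l6} to extract the tight cancellation. The bookkeeping requires splitting by the cyclic arrangement of triangles and $4$/$5^+$-faces around $v$, and separately using Lemmas~\ref{l2} and~\ref{l3} to rule out degenerate configurations involving low-degree neighbors.
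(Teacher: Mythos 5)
Your proposal is correct and follows essentially the same route as the paper: the same case split on $d_G(v)\in\{3,4,5\}$ and then on $f_3(v)$, the same invocations of Lemmas \ref{l2}--\ref{l6} to pin down $n_3(v)$, $n_4(v)$, $f_4(v)$, and the neighbor degrees, and the same key device of bounding $t_5(v)$ by the number of edges at $v$ lying in both a $3$-face and a $5^+$-face. The arithmetic you sketch matches the paper's verification in each subcase, including the tight cancellations at $f_3(v)\in\{3,4\}$.
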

\begin{pfc}
Now we are to check that any vertex $v\in V(G)$ has a non-negative charge after discharging case by case.

Case 1. $d_G(v)=3$.

  For each $3$-vertex $v\in V(G)$, we have $\mu(v)=d_G(v)-4=-1$. By Lemma \ref{l2}, $3$-vertex $v$ is not contained in any $3$-face. By Lemma \ref{l3}, the neighbors of $v$ are all $5$-vertices and $v$ is contained in at most one $4$-face. Then $n_5(v)=3$ and $f_{5^+}(v)\ge2$. By $(R_1)$ and $(R_5)$, $\mu'(v)=\mu(v)+\frac{1}{5}\times(f_{5^+}(v)+n_5(v))\ge 0$.
  
Case 2. $d_G(v)=4$.

  For every $4$-vertex $v\in V(G)$,  we have $\mu(v)=d_G(v)-4=0$. By Lemma \ref{l3}, all the neighbors of $3$-vertex are $5$-vertex, which means $n_3(v)=0$. By Lemma \ref{l4}$(i)$, there is $f_3(v)\le1$. Then we have the following two cases to discuss.

Subcase 2.1 $f_3(v)=0$. 

Obviously we have $f_{4^+}(v)=4$ and so $\mu'(v)\ge\mu(v)\ge0$.

Subcase 2.2 $f_3(v)=1$.

In this case, $v$ is incident with one $3$-face, by Lemma \ref{l2}, and then we have $n_5(v)\ge1$. By Lemma \ref{l5}(i), we have $f_4(v)\le2$. When $f_4(v)\le1$, there is 
$\mu'(v)\ge\mu(v)+\frac{1}{15}-\frac{1}{3}+\frac{2}{5}\ge\frac{2}{15}$ by $(R_2), (R_4),(R_5)$. Otherwise $f_4(v)=2$, we have $n_5(v)=4$ by Lemma \ref{l5} (ii). By $(R_2),(R_4)$ and $(R_5)$, we have $\mu'(v)\ge\mu(v)+\frac{4}{15}-\frac{1}{3}+\frac{1}{5}\ge\frac{2}{15}$.

Thus for any $4$-vertex $v\in V(G)$, $\mu'(v)\ge0$ always holds.

Case 3. $d_G(v)=5$

For each $5$-vertex $v$ contained in $G$, the original charge is $\mu(v)=d_G(v)-4=1$. Obviously we have $n_3(v)+n_4(v)+t_5(v)\le5$. If $v$ is a $5$-vertex with $f_3(v)\le 3$, by $(R_1)-(R_5)$, then we have
\begin{equation}\label{n3}
    \mu'(v)\ge  \mu(v)+\frac{1}{5}(f_{5^+}(v)-n_3(v))-\frac{1}{3}f_3(v)-\frac{1}{15}(n_4(v)+t_5(v)).
\end{equation} 
By Lemma \ref{l4}, $v$ is contained in at most $4$ triangles. Then we have the following cases to verify.

Subcase 3.1 $f_3(v)=0$

 Then $\mu'(v)\ge1-\frac{1}{5}\times(n_3(v)+n_4(v)+t_5(v))\ge0$ by (\ref{n3}).

Subcase 3.2 $f_3(v)=1$

In this case, $v$ is contained in one triangle. By Lemma \ref{l2}, $3$-vertex is not incident with any $3$-face, that is $n_3(v)\le3$. If $f_4(v)\le3$, then $f_{5^+}(v)\ge1$. By (\ref{n3}), we have $\mu'(v)\ge1+\frac{1}{5}\times(1-3)-\frac{1}{3}-\frac{1}{15}\times2\ge\frac{2}{15}$.  By Lemma \ref{l3}, $3$-vertex is contained in at most one $4$-face. Hence, if $f_4(v)=4$, then $n_3(v)=0$. By (\ref{n3}), we gain $\mu'(v)\ge1-\frac{1}{3}-\frac{1}{15}\times5\ge\frac{1}{3}$.

Subcase 3.3 $f_3(v)=2$

By Lemma \ref{l6}(i), we obtain $n_3(v)\le1$. When $f_4(v)\le2$, by (\ref{n3}), there is $\mu'(v)\ge1+\frac{1}{5}\times(1-1)-\frac{1}{3}\times2-\frac{1}{15}\times4\ge\frac{1}{15}$. When $f_4(v)=3$, by Lemma \ref{l3}, we have $n_3(v)=0$. By (\ref{n3}),  $\mu'(v)\ge1-\frac{1}{3}\times2-\frac{1}{15}\times5\ge0$. 

Subcase 3.4 $f_3(v)=3$

By Lemma \ref{l6}(ii), we have $n_3(v)=0$. If $f_4(v)=0$, we have $n_4(v)+t_5(v)\le5$ and then $\mu'(v)\ge1+\frac{1}{5}\times2-\frac{1}{3}\times3-\frac{1}{15}\times5\ge\frac{1}{15}$  by (\ref{n3}). When $f_4(v)=1$, by Lemma \ref{l6}(ii)(b), there is $n_4(v)\le1$. By the definition of $t_5(v)$, we have $t_5(v)\le2$(see Figure \ref{fig:5}(c): $t_5(v)=2$ when $v$ is a corner of each of the bad $5$-vertiex $v_1,v_5$, or Figure \ref{fig:5}(d): $t_5(v)=1$ when $v$ is the only corner of the bad $5$-vertex $v_5$). By (\ref{n3}), we get $\mu'(v)\ge1+\frac{1}{5}-\frac{1}{3}\times3-\frac{1}{15}\times3\ge0$. If $f_4(v)=2$, then we obtain $n_5(v)=5$ by Lemma \ref{l6}(ii)(a). However, by the definition of $t_5(v)$, $t_5(v)=0$ holds in this case. Thus we gain $\mu'(v)\ge1-\frac{1}{3}\times3\ge0$ by (\ref{n3}).

Subcase 3.5 $f_3(v)=4$

In this case, $v$ is a bad $5$-vertex. By Lemma \ref{l6}(iii), we have $n_5(v)=5$ and $f_4(v)=0$, which implies $f_{5^+}(v)=1$. For the vertex $v$, as $v_1,v_5$ are corners of $v$ (see Figure \ref{fig:5}(b)), it follows that $v$ obtains charge $\frac{1}{15}\times 2$ from $v_1,v_5$ by $(R_3)$.  By the definition of corner of bad $5$-vertex, it is straightforward to have that $v$ is not the corner of each $v_i$ with $2\le i\le 4$. By Lemma \ref{l6}(iii), $v_1,v_5$ are not bad $5$-vertices which imply that $v$ is not corner of $v_1,v_5$.  By (R3), any vertex $v_i$ with $1\le i\le 5$ does not receive charge from $v$. By $(R_3), (R_4), (R_5)$,  $\mu'(v)\ge1+\frac{1}{5}-\frac{1}{3}\times4+\frac{1}{15}\times2\ge0$.

Thus $\mu'(v)\ge 0$ for each $ v\in V(G)$.

By Claims $1$ and $2$, each vertex and face has a non-negative final charge. Hence total final charge is non-negative, which is a contradiction to (\ref{euler}). The proof is completed. \end{pfc}

\section{Concluding Remarks}
This paper has proved that every planar graph $G$ with $\Delta(G)\le 5$ admits a square coloring with $\chi(G^2)\le 17.$ Actually, applying the same method, we could get the same upper bound for the list version of Theorem \ref{main}. It would be interesting to research further to obtain the optimal upper bound. After completing the writing of this paper, we learned that Aoki also proved Theorem \ref{main} independently in \cite{Aoki2023}.  For more related open problems on square coloring, We refer interested readers to \cite{Cranston2023} 


\section*{Acknowledgments}
This research is partially supported by National Natural Science Foundation of China
(No. 11901434).

 \end{document}